\newtheorem{theorem}{Theorem}
\newtheorem{proposition}{Proposition}
\newtheorem{corollary}{Corollary}
\newtheorem{lemma}{Lemma}
\newtheorem{remark}{Remark}
\newcommand{\C}{\mathbb C}
\newcommand{\Z}{\mathbb Z}
\newcommand{\R}{\mathbb R}
\newcommand{\Q}{\mathbb Q}
\begin{document}
\title[Perturbations of L-functions]
{Perturbations of L-functions with or without non-trivial zeros off
the critical line}

\thanks{First author supported by NSERC (Canada) and MEC
(Espa\~{n}a). Second author partially supported by grant
MTM2006--11391}


\author{P. M. Gauthier}
\address{D\'{e}partement de math\'{e}matiques et de statistique
\\
         Universit\'{e} de Montr\'{e}al
\\
         CP 6128 Centre Ville
\\
         Montr\'{e}al, Qu\'{e}bec H3C 3J7
\\
         Canada}

\email{gauthier@dms.umontreal.ca}


\author{X. Xarles}

\address{Departament de Matem\`atiques, Edifici C, Facultat de Ci\`encies,
Universitat Aut\`onoma de Barcelona, 08193 Bellaterra (Barcelona) ,
Espa\~{n}a}

\email{xarles@mat.uab.cat}


\subjclass[2000]{11M26, 11M41, 30E10} \keywords{$L$-functions,
Riemann Hypothesis}

\begin{abstract}
There exist small perturbations of $L$-functions, satisfying the
appropriate functional equation, for which the analogue of the
Riemann hypothesis fails radically. Moreover, this phenomenon is
generic. However, there also exist small perturbations, for which
the analogue of the Riemann hypothesis holds.
\end{abstract}
\maketitle

\section{Introduction}

In 2003, Lev D. Pustyl'nikov \cite{P} showed that the Riemann
zeta-function $\zeta(s)$ can be approximated by a function
$\zeta_\epsilon(s)$ which fails to satisfy the Riemann hypothesis.
That is, $\zeta_\epsilon(s)$ has non-trivial zeros off the critical
axis. Moreover, the function $\zeta_\epsilon(s)$ shares the
following important properties of the Riemann zeta-function. 1)
$\zeta_\epsilon(s)$ is a meromorphic function with a unique pole at
$s=1$ and assumes real values for real values of $s$. 2) The real
zeros of $\zeta_\epsilon(s)$ are the negative even integers. 3) The
function $\zeta_\epsilon(s)$ satisfies the functional equation of
the Riemann zeta-function. In 2004, Paul M. Gauthier and Eduardo S.
Zeron \cite{GZzeta} considerably improved the result of Pustyl'nikov
and in 2007, Markus Niess \cite{Ni} showed that there exist such
functions $\zeta_\epsilon(s)$ which also have interesting
universality properties.

$L$-functions are generalizations of the Riemann zeta-function and
have led to the Langlands programme and to fundamental open
questions such as the Birch and Swinnerton-Dyer conjecture.  In the
present paper, we establish results for $L$-functions analogous to
those established in \cite{GZzeta} for the Riemann zeta function.
Namely, we approximate $L$-functions by functions for which the
grand Riemann hypothesis fails.  In addition, we also establish two
results for $L$-functions which were not yet known for the Riemann
zeta function. First of all, we show that the preceding result on
approximation of $L$-functions by functions which fail to satisfy
the Riemann Hypothesis is generic. For the Riemann zeta-function,
this has been shown simultaneously and independently by Niess
\cite{Ni}. Secondly, despite this genericity of functions which fail
to satisfy the Riemann Hypothesis, we also establish the possibility
of approximating by functions for which the grand Riemann hypothesis
{\em does} hold.

Many $L$-functions $f(s)$ appearing naturally in number theory
satisfy a functional equation
$$
\Lambda_Q(f,s):=Q(s)f(s)=\Lambda_Q(f,1-s),
$$
where  $Q=Q_f$ is a meromorphic function. Examples of such functions
are the $L$-functions associated to some Dirichlet Characters, Hecke
characters, modular forms and more general automorphic forms. In
this case, the function $Q(s)$ is always of the form
$K^s\prod_{j=1}^n \Gamma(\lambda_j s + \mu _j)$, where $n$ is a
natural number, $K$ and the $\lambda_j$'s are positive real numbers
and the $\mu_j$'s are complex numbers with non-negative real part.

The Grand Riemann Hypothesis for this type of $L$-funtions asserts
that all zeros of $\Lambda_Q(f,s)$ lie on the critical axis $\Re
s=1/2$. We shall show that the conclusion of the Riemann Hypothesis
fails for most functions satisfying such a functional equation, and,
in particular, for small perturbations of such $L$-functions.

We shall say that a function $\Lambda$ is symmetric with respect to
the point $s=1/2,$ if $\Lambda(1-s)=\Lambda(s)$, for all $s\in\C.$

If $Q(s)$ is a meromorphic function on $\C$, not identically zero,
we denote by $M_Q$ the family of meromorphic functions $f$ on $\C$,
not identically zero and satisfying the functional equation
\begin{equation}\label{equation}
\Lambda_Q(f,s):=Q(s)f(s)=\Lambda_Q(f,1-s),
\end{equation}
and we denote by $H_Q$ the subfamily of entire functions in $M_Q$.
We shall say that a function $f\in M_Q$ satisfies the Riemann
hypothesis, if all zeros of $\Lambda_Q(f,s)$ lie on the critical
axis $\Re s=1/2$.

We endow $M_Q$ with the usual topology of uniform convergence on
compacta. Thus, a sequence $f_n$ of functions in $M_Q$  converges to
a function $f$ in $M_Q$, if for each compact $K\subset\C$ and each
$\epsilon>0$, there is an $n_0$ such that, for $n>n_0$, the
functions $f$ and $f_n$ have the same poles with same principal
parts on $K$ and $|f(z)-f_n(z)|<\epsilon,$ for all $z\in K$. Denote
by $RM_Q$ and $RH_Q$ the functions $f$ in $M_Q$ and $H_Q$
respectively, for which all zeros of $\Lambda_Q(f,s)$ lie on the
critical axis $\Re s=1/2$. Thus, $RM_Q$ and $RH_Q$ are respectively
the meromorphic and holomorphic  functions satisfying the functional
equation associated to $Q$ and for which the associated Riemann
hypothesis holds.

Let us say that a function $\Lambda$ defined for $s\in\C$ is
symmetric with respect to the real axis, if
$\overline{\Lambda(\overline s)}=\Lambda(s)$, for all $s\in\C.$ In
the previous papers on this topic (\cite{P},\cite{GZzeta} and
\cite{Ni}), the function approximating the Riemann zeta-function
also shared with the zeta-function the property of symmetry with
respect to the real axis. In the present paper, if the $L$-function
under consideration has this symmetry, then the approximating
functions can also be chosen to have this same symmetry. We have not
included this in the statements of our results in order to treat
more $L$-functions - not just those having this additional symmetry.
In particular, Theorem \ref{generic} and Theorem \ref{main+}, which
for the Riemann zeta-function are new, are also valid with this
"real" symmetry.


\section{Approximation by functions failing to satisfy the analogue of
the grand Riemann hypothesis}

If $f\in M_Q$ and $\nu\not\equiv 0$ is meromorphic on $\C$ and
symmetric with respect to the point $1/2$, then the product $\nu f$
is again in $M_Q$. It follows that, if $M_Q$ is not empty, then
there exists a function in the class $RM_Q$. That is, there exists a
function in $M_Q$ which satisfies the Grand Riemann Hypothesis.  On
the other hand the following result shows that, generically, the
Grand Riemann Hypothesis fails.

\begin{theorem}\label{generic}
In the spaces $M_Q$ and $H_Q$ of meromorphic and holomorphic
functions satisfying the functional equation (1), the classes
$M_Q\setminus RM_Q$ and $H_Q\setminus RH_Q$, for which the Riemann
Hypothesis fails are open and dense.
\end{theorem}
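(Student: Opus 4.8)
The strategy is to work separately on openness and density, and to reduce everything to a single elementary observation: multiplying an element of $M_Q$ by a function $\nu$ that is symmetric with respect to $1/2$ keeps us in $M_Q$, and such a $\nu$ can be used to introduce a zero of $\Lambda_Q$ at a prescribed point off the critical line.

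For \emph{density}: given $f\in M_Q$ (resp.\ $H_Q$), a compact set $K$, and $\epsilon>0$, I want to produce $g$ in the same class, close to $f$ on $K$, with a zero of $\Lambda_Q(g,\cdot)$ off $\Re s=1/2$. Pick a point $a$ with $\Re a\neq 1/2$ and $a\notin K$, and set $g=\nu f$ where $\nu(s)=1+c\,h(s)h(1-s)$ for a suitable entire function $h$ vanishing at $a$ (for instance a product of two shifted Weierstrass-type factors, or simply $h(s)=(s-a)$ composed to force $h(a)h(1-a)=0$); the factor $\nu$ is symmetric with respect to $1/2$ by construction, equals $1+$ something small on $K$ when $|c|$ is small, and makes $\Lambda_Q(g,s)=\nu(s)\Lambda_Q(f,s)$ vanish at $a$ unless $\Lambda_Q(f,a)$ already forces a cancellation — but we may first perturb slightly to guarantee $f(a)\neq 0$, or simply choose $a$ among the (infinitely many) points where $f$ is nonzero. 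One must also ensure $g$ is not identically zero and, in the holomorphic case, that $\nu$ is entire, which the construction above respects. A small additional care: the perturbation must not destroy poles/principal parts on $K$; since $\nu$ is holomorphic and close to $1$ there, this is automatic.

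For \emph{openness}: suppose $f\in M_Q\setminus RM_Q$, so $\Lambda_Q(f,\cdot)$ has a zero at some $a$ with $\Re a\neq 1/2$. By the functional equation it also has a zero at $1-a$, and by Hurwitz's theorem, any $g\in M_Q$ sufficiently close to $f$ uniformly on a small disk around $a$ (with the same principal part data, which is vacuous near $a$ if $a$ is not a pole) will have a zero near $a$, hence still off the critical line. This shows $M_Q\setminus RM_Q$ is open; the same argument with entire functions gives openness of $H_Q\setminus RH_Q$ in $H_Q$. The only subtlety is matching the topology: convergence in $M_Q$ includes agreement of poles and principal parts on compacta, which only helps, so Hurwitz applies verbatim on a disk avoiding the poles of $\Lambda_Q(f,\cdot)$.

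The main obstacle I anticipate is not the topology but making sure $M_Q$ (resp.\ $H_Q$) is actually \emph{nonempty} and that the multiplier trick stays inside it — i.e.\ that $\nu f$ genuinely satisfies \eqref{equation} and inherits the meromorphic/entire nature — together with handling the degenerate possibility that $\Lambda_Q(f,a)=0$ forces the introduced factor to be cancelled by a pole of $f$. Both are dispatched by choosing the perturbation point $a$ in the (open, dense, cofinite-complement) set where $f$ is finite and nonzero, and by noting that $\nu$ being symmetric and holomorphic guarantees $\Lambda_Q(\nu f,s)=\nu(s)\Lambda_Q(f,s)$ with the zero at $a$ surviving. Once these points are nailed down, openness is Hurwitz and density is the one-line multiplier construction, done uniformly for both the meromorphic and the holomorphic versions.
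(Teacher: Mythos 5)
Your openness argument is correct and essentially identical to the paper's: Hurwitz's theorem applied near a zero of $\Lambda_Q(f,\cdot)$ that lies off the critical axis (and is not a pole) shows that a sufficiently close $g\in M_Q$ must also have a nearby zero off the axis. Your overall \emph{strategy} for density --- multiply $f$ by a multiplier $\nu$ that is symmetric with respect to $1/2$, close to $1$ on the compact set $K$, and vanishes at a point $a$ off the critical axis --- is also the same as the paper's. However, your concrete construction of $\nu$ is broken. You set $\nu(s)=1+c\,h(s)h(1-s)$ with $h(a)=0$; but then $\nu(a)=1+c\cdot 0\cdot h(1-a)=1\neq 0$, so $\Lambda_Q(g,a)=\nu(a)\Lambda_Q(f,a)=\Lambda_Q(f,a)$, which is nonzero by your choice of $a$. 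The multiplier you built introduces no new zero at all. What you actually need is a function $\nu$, symmetric with respect to $1/2$, with $\nu(a)=0$ and $|\nu-1|$ small on $K$; the paper achieves this by first producing (via the Deutsch--Walsh simultaneous approximation/interpolation lemma) a polynomial $p$ with $|p-1|$ small on $K$ and $p(a)=0$, then symmetrizing via $\nu(s)=p(s)\overline{p(\overline s)}p(1-s)\overline{p(1-\overline s)}$.

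There is a second gap: you assert that preserving the poles and principal parts of $f$ on $K$ is ``automatic'' because $\nu$ is holomorphic and close to $1$. This is false under the topology the paper puts on $M_Q$, which demands that nearby functions agree \emph{exactly} in their principal parts on $K$. If $f$ has a pole of order $m$ at $b\in K^0$, then $\nu f$ has the same principal part as $f$ at $b$ only if $\nu(b)=1$ and $\nu^{(j)}(b)=0$ for $j=1,\dots,m-1$; being merely close to $1$ near $b$ does not suffice. The paper's use of the Deutsch--Walsh lemma is precisely to arrange this interpolation to high order at the poles while simultaneously staying close to $1$ on $K$, and then a maximum-principle argument on $K$ controls $|f-g|$ even across the poles. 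Your sketch skips both of these necessary steps.
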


The following version of the Walsh Lemma on simultaneous
approximation and interpolation is due to Frank Deutsch \cite{De}.

\begin{lemma}\label{walsh}
Let $E$ be a dense subspace of a normed linear space $F$. Then, for
each $f\in F$, for each $\epsilon>0$, and for each finite choice of
continuous linear functionals $L_1,\cdots,L_m$ on $F$, there is an
$e\in E$ such that $\|e-f\|<\epsilon$ and
$$
    L_j(e)=L_j(f), \,\, j=1,\cdots,m.
$$
\end{lemma}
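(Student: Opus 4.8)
The plan is to deduce the lemma from two elementary facts: that linear independence of finitely many continuous functionals is inherited upon restriction to a dense subspace, and that a bounded linear surjection onto a finite-dimensional space has a bounded linear right inverse. Throughout, write $\mathbb{K}$ for the scalar field (either $\R$ or $\C$).

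First I would reduce to the case where $L_1,\dots,L_m$ are linearly independent in $F^{*}$. After relabelling, let $L_1,\dots,L_d$ be a maximal linearly independent subset of $\{L_1,\dots,L_m\}$; each remaining $L_k$ ($k>d$) is then a linear combination of $L_1,\dots,L_d$, so the interpolation condition $L_k(e)=L_k(f)$ follows automatically from those for $j\le d$. Hence we may assume $L_1,\dots,L_m$ themselves are linearly independent.

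Next, I claim the restrictions $L_1|_E,\dots,L_m|_E$ are still linearly independent as functionals on $E$: if $\sum_j a_jL_j$ vanished on $E$, then, being continuous with $E$ dense in $F$, it would vanish on all of $F$, and linear independence in $F^{*}$ forces every $a_j=0$. Consequently the bounded linear map
$$
T\colon F\to\mathbb{K}^{m},\qquad Tx=(L_1(x),\dots,L_m(x)),
$$
restricts to a \emph{surjection} $T|_E\colon E\to\mathbb{K}^{m}$: if its image were a proper subspace, a nonzero linear functional on $\mathbb{K}^{m}$ annihilating that subspace would yield a non-trivial relation $\sum_ja_jL_j|_E=0$, contradicting the previous claim. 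Choose $u_1,\dots,u_m\in E$ with $Tu_i=e_i$ (the standard basis vectors) and define $R\colon\mathbb{K}^{m}\to E$ by $R(c)=\sum_ic_iu_i$; then $T\circ R=\mathrm{id}_{\mathbb{K}^m}$ and $\|R(c)\|\le M\,|c|$ for a constant $M$ depending only on the $u_i$.

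Finally, given $f\in F$ and $\epsilon>0$, pick $\delta>0$ with $\delta(1+M\|T\|)<\epsilon$, and, using density of $E$, choose $e_0\in E$ with $\|e_0-f\|<\delta$. Put
$$
e:=e_0+R\bigl(Tf-Te_0\bigr)\in E.
$$
Then $Te=Te_0+(Tf-Te_0)=Tf$, i.e. $L_j(e)=L_j(f)$ for all $j$, while
$$
\|e-f\|\le\|e_0-f\|+\|R(Tf-Te_0)\|\le\delta+M\,\|T\|\,\|e_0-f\|<\delta(1+M\|T\|)<\epsilon .
$$
The only step requiring genuine care is the passage from linear independence of the $L_j$ on $F$ to linear independence, hence surjectivity, of their restrictions to $E$ — this is exactly where density is used. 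Everything else is the finite-dimensional open-mapping argument producing the right inverse $R$, which costs nothing since the target $\mathbb{K}^m$ is finite-dimensional; in particular no completeness of $F$ is needed.
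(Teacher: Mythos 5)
Your proof is correct and complete. Note, however, that the paper itself gives no proof of this lemma — it simply cites Deutsch's 1966 paper — so there is no in-paper argument to compare against. Your argument is the standard one (and essentially Deutsch's): reduce to linearly independent functionals, observe that density plus continuity makes the restrictions $L_j|_E$ independent and hence $T|_E$ surjective onto $\mathbb{K}^m$, pick a right inverse $R$ with range in $E$, and correct a rough approximant $e_0$ by $R(Tf-Te_0)$. The one step that genuinely uses density twice — once to get $e_0$, and once (less visibly) to ensure the interpolating vectors $u_i$ can be taken in $E$ — is exactly where you placed the emphasis, and the quantitative bookkeeping with $\delta(1+M\|T\|)<\epsilon$ is handled cleanly. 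Nothing to fix.
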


\begin{proof}[Proof of Theorem \ref{generic}]
Let $\{f_n\}$ be a sequence of functions in $RM_Q$ and suppose
$f_n\rightarrow f$, where $f\in M_Q$.  Suppose $f\not\in RM_Q.$
Then, $\Lambda_Q(f,\cdot)$ has a zero at a point $s_0$ not on the
critical axis. Let $d$ be less than the distance of $s_0$ from the
critical axis and also less than the distance from $s_0$ to the
nearest pole of $f$. Then, by Hurwitz's theorem, there is an $n_0$
such that for all $n>n_0$, the function $\Lambda_Q(f_n,\cdot)$ has a
zero in the disc $|s-s_0|<d.$ Thus, $f_n\not\in RM_Q$, which is a
contradiction. Thus, $RM_Q$ is closed. The proof that $RH_Q$ is
closed is similar.

To show that $RM_Q$ is nowhere dense, let $f\in M_Q$,  and consider
a basic neighborhood of $f$:
$$
    N(f,K,\epsilon)=\{g\in M_Q:\max_{s\in K}|f(s)-g(s)|<\epsilon\},
$$
where $K$ be a compact subset of $\C$ and $\epsilon$ a positive
number. We may assume that $K$ is a closed disc centered at $s=1/2.$
And that $f$ has no poles on the boundary of $K$.  Choose a point
$a$ such that the set $A=\{a,1-a,\overline a,1-\overline a\}$ is
disjoint from $K$, the critical axis, the real axis and the zeros
and poles of $\Lambda_Q(f,\cdot).$ Let $B$ be a finite set in the
interior $K^0$ of $K$ which is symmetric with respect to the point
$1/2$ and the real axis and includes all of the poles of $f$ on $K$.

Fix $\epsilon_0>0$ and let $m$ be the maximum order of the poles of
$f$ at the points of $B$. By Lemma \ref{walsh}, given
$\epsilon_1>0$, there is a polynomial $p$, such that
$|p-1|<\epsilon_1$ on $K$, for each $b\in B$, $p(b)=1, p^{(j)}(b)=0,
j=1,\cdots,m-1,$ and $p(a)=0,$ for each point $a\in A.$ We may
choose $\epsilon_1$ so small that, for the polynomial
$$\nu(s)=p(s)
\overline{p(\overline s)}
 p(1-s)\overline{p(1-\overline s)},
$$
$|\nu-1|<\epsilon_0$ on $K$.  The polynomial $\nu$ is clearly
symmetric with respect to the point $1/2$ and the real axis, and
assumes the value $0$ at the points of $A$. Moreover, we claim that
$\nu$ continues to assume the value $1$ with multiplicity at least
$m$ at each point of $B$. Since each of the four factors of $\nu$
has this property, it is sufficient to verify that, if each of two
polynomials, say $F$ and $G$, assumes the value $1$ at a point $b$
with multiplicity at least $m$, then the same is true of the product
$FG$. For each $k=1,\cdots,m-1$, the $k$-th derivative of $FG$
evaluated at $b$ is a finite sum, each of whose terms is a product,
one of whose factors is a derivative of either $F$ or $G$ of order
between $1$ and $m-1$. But these derivatives all vanish at the point
$b$. Hence, $FG$ assumes the value $1$ at the point $b$ with
multiplicity at least $m$. Thus, the polynomial $\nu$ verifies the
claim.

Set $M=\max|f|$ on $\partial K.$ Now, choose $\epsilon_0=\epsilon/M$
and set $g=\nu f.$ Then, $g$ satisfies the functional equation and
$\Lambda_Q(g,\cdot)$ has zeros at the points of $A$.  On $\partial
K$  we have the estimate $|f-g|<\epsilon.$  Since, $1-\nu$ has a
zero of order at least $m$ at each pole of $f$ in $K^o$, it follows
that $f-g=(1-\nu)f$ is holomorphic in $K^o$. By the maximum
principle, the estimate $|f-g|<\epsilon$ holds on all of $K$.

Thus, $g$ is in the neighborhood $N(f,K,\epsilon)$ of $f$ and not in
$RM_Q.$ Since $f$ was arbitrary in $M_Q$, the complement of $RM_Q$
in $M_Q$ is open.  Thus the closed set $RM_Q$ is nowhere dense in
$M_Q$.

If $f$ were initially chosen in $H_Q$, then the corresponding $g$
would be in $RH_Q$ and so the same proof shows that also that the
closed set $RH_Q$ is nowhere dense in $H_Q.$

\end{proof}

In the proof of Theorem \ref{generic}, we invoked a Walsh type lemma
in order to simultaneously approximate on a compact set and
interpolate at finitely many points. In the sequel, we shall need to
approximate on a (possibly unbounded) closed set and simultaneously
interpolate on a (possibly infinite) discrete subset thereof.

For a closed subset $X\subset\C$, we denote by $A(X)$ the space of
functions continuous on $X$ and holomorphic on $X^0.$ Every function
$f:X\rightarrow\C$, which can be uniformly approximated on $X$ by
entire functions, is necessarily in the class $A(X).$  A closed set
$X$ is said to be a set of {\em uniform} approximation if, for every
$f\in A(X)$ and every positive {\em constant} $\epsilon$, there is
an entire function $g$ such that $|f(z)-g(z)|<\epsilon,$ for all
$z\in X.$ A theorem of Norair U. Arakelian (see \cite{Gai}) asserts
that a closed subset $E\subset\C$ is a set of uniform approximation
if and only if $\overline\C\setminus E$ is connected and locally
connected, where $\overline\C$ denotes the closed complex plane
$\C\cup\{\infty\}.$

A closed set $X$ is said to be a set of {\em tangential}
approximation if, for every $f\in A(X)$ and every positive
continuous {\em function} $\epsilon$ on $X$, there is an entire
function $g$ such that $|f(z)-g(z)|<\epsilon(z),$ for all $z\in X.$
By the Tietze extension theorem for closed sets \cite{Du}, it makes
no difference whether we take the function $\epsilon$ to be defined
on $X$ or on $\C$. Of course, a set of tangential approximation is
necessarily a set of uniform approximation. Let us say that a family
$\{E_j\}$ of subsets of $\C$ has {\em no long islands}, if, for each
$r>0$, there is a (larger) $r'$ such that no $E_j$ meets both
circles $|z|=r$ and $|z|=r'$. This condition was introduced by the
first author \cite{G}, who showed that, for a closed set $E$ of
uniform approximation to be a set of tangential approximation, a
necessary condition is that the family of components of the interior
$E^0$ have no long islands. Ashot H. Nersessian \cite{N} showed that
this condition is also sufficient. For an overview of uniform and
tangential approximation, see the book of Dieter Gaier \cite{Gai}.

An example of a set of uniform approximation, would be the set $E_0$
consisting of the union of the critical strip $0\le\Re s\le1$ and
the real axis $\Im s=0.$ However, this is not a set of tangential
approximation, since the interior has an unbounded component. Let
$\{a_j\}$ and $\{b_j\}$ be sequences of positive numbers strictly
increasing to infinity, with $a_j<b_j<a_{j+1}$, for each $j$. For
each $j$, let $A_j$ be the closed {\em double rectangle}
$A_j=\{z:a_j\le|\Im z|\le b_j, |\Re z|\le j\}$. Let $E_1$ be the
union of the real axis, the critical axis and all of the double
rectangles:
$$
    E_1=\{z:\Im z=0\}\cup\{z:\Re z=1/2\}\cup\bigcup_jA_j.
$$
The set $E_1$ is an example of a set of tangential approximation.

The following lemma was given in \cite{GZzeta}, where it was claimed
that the proof followed from another paper. For completeness, we
provide the proof.

\begin{lemma}\label{walsh tangential finite}
Let $X$ be a closed set of tangential approximation in $\C$. Let $B$
be a finite subset of $X$ and for each $b\in B,$ let $m_b$ be a
natural number, with the restriction that $m_b=1$ if $b\in\partial
X.$ Then, for every $f\in A(X)$ and for every positive continuous
function $\epsilon: X\rightarrow\R,$ there exists an entire function
$F$, such that the following simultaneous approximation and
interpolation holds: $|f(z)-F(z)|<\epsilon(z)$ for every $z\in X$,
and $f-F$ has a zero of multiplicity (at least) $m_b$, for every
$b\in B.$
\end{lemma}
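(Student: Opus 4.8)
The plan is to reduce the simultaneous approximation-and-interpolation statement to a pure tangential-approximation statement by dividing out the interpolation conditions. First I would use the hypothesis that $B\cap\partial X$ consists only of points with $m_b=1$: near such a boundary point there is nothing to interpolate beyond a simple zero, and a simple zero of $f-F$ can be arranged by a small perturbation. The core difficulty is the interior points of $B$, where we may demand a zero of $f-F$ of high multiplicity $m_b$.

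Here is the approach. Set $P(z)=\prod_{b\in B}(z-b)^{m_b}$, a polynomial whose zero set, counted with multiplicity, is exactly the interpolation data. Given $f\in A(X)$, I would like to write $f-F=P\cdot G$ for some $G\in A(X)$ and then approximate. The obstacle is that $F$ must be \emph{entire}, so I cannot simply factor $f$ itself. Instead I proceed as follows. Since $B$ is finite, choose a polynomial (hence entire) $p$ that matches the Taylor expansion of $f$ at each $b\in B$ up to order $m_b-1$ (ordinary Hermite interpolation at finitely many nodes); then $f-p$ vanishes to order $\ge m_b$ at each $b$, so $h:=(f-p)/P$ extends to a function in $A(X)$ — one checks holomorphy across the interior nodes by the removable-singularity theorem and continuity at any boundary node using $m_b=1$ there. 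Now apply the definition of tangential approximation: there is an entire function $G$ with $|h(z)-G(z)|<\epsilon(z)/(|P(z)|+1)$ for all $z\in X$. Set $F=p+P\cdot G$. Then $F$ is entire, $f-F=P\cdot(h-G)$ vanishes to order $\ge m_b$ at each $b$, and $|f(z)-F(z)|=|P(z)|\,|h(z)-G(z)|<|P(z)|\epsilon(z)/(|P(z)|+1)<\epsilon(z)$ on $X$, as required.

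The one genuinely delicate point — and the step I expect to be the main obstacle — is verifying that $h=(f-p)/P$ really belongs to $A(X)$, i.e. that it is continuous on $X$ and holomorphic on $X^0$. Holomorphy on $X^0$ is clear away from $B$, and at an interior node $b\in B\cap X^0$ the numerator vanishes to order $\ge m_b$ while $P$ vanishes to order exactly $m_b$, so the quotient is holomorphic there. At a boundary node $b\in B\cap\partial X$ we only know $m_b=1$; here $f-p$ vanishes at $b$ and $P$ has a simple zero at $b$, but $b$ need not have a neighborhood in $X^0$, so one cannot invoke the removable-singularity theorem. Instead I would argue directly: writing $f-p=(z-b)k(z)$ is not automatic when $f$ is merely in $A(X)$, so the cleanest fix is to choose the Hermite-interpolating polynomial $p$ more carefully — or to note that $\epsilon$ can absorb a bounded factor — and to handle continuity at such $b$ by the elementary estimate that $(f(z)-p(z))/(z-b)$ stays bounded and has a limit as $z\to b$ in $X$, which follows from differentiability of $f-p$ at $b$ in the sense appropriate to $A(X)$. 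Once this local regularity at the finitely many nodes is established, the rest of the argument is the routine application of tangential approximation and a bookkeeping estimate as above.
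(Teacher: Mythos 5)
Your decomposition $F = p + P\cdot G$ with $P(z)=\prod_{b\in B}(z-b)^{m_b}$ and $p$ a Hermite interpolant is a genuinely different route from the paper's, and it works cleanly whenever all nodes are interior. But the step you flag as ``genuinely delicate'' is in fact where the argument breaks: at a boundary node $b\in B\cap\partial X$, the quotient $h=(f-p)/P$ need not lie in $A(X)$, and your proposed justification --- that $(f(z)-p(z))/(z-b)$ ``stays bounded and has a limit \ldots which follows from differentiability of $f-p$ at $b$ in the sense appropriate to $A(X)$'' --- appeals to a regularity that members of $A(X)$ simply do not possess. An $f\in A(X)$ is only continuous at $\partial X$; it need not be differentiable there, and the difference quotient need not even be bounded. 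Concretely, with $X=\R$ (so $X^0=\emptyset$ and every node is a boundary node with $m_b=1$) and $f(x)=|x|^{1/2}$, any polynomial $p$ with $p(0)=f(0)=0$ gives $(f(x)-p(x))/x \sim |x|^{-1/2}\mathrm{sign}(x)$ near $0$, which blows up; with $f(x)=|x|$ the quotient is bounded but oscillates between two values and has no limit. In either case $h\notin A(X)$, so tangential approximation cannot be applied to it and the construction stalls. This is not a gap you can paper over by choosing $p$ ``more carefully'' or by letting $\epsilon$ absorb a factor: the obstruction is the nonexistence of $h$ as an element of $A(X)$, not the size of an estimate.

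The paper sidesteps exactly this issue by never dividing. It introduces the weighted space $A_\epsilon(X)=\{g\in A(X):\sup_X |g|/\epsilon<\infty\}$ with norm $\|g\|_\epsilon=\sup_X|g|/\epsilon$, observes that tangential approximation says precisely that restrictions of entire functions are dense in $A_\epsilon(X)$, and imposes the interpolation conditions as a \emph{finite} family of continuous linear functionals: $g\mapsto g^{(j)}(b)$ for $b\in B\cap X^0$, $0\le j\le m_b-1$ (continuous by Cauchy estimates since $b$ is interior), and $g\mapsto g(b)$ for $b\in B\cap\partial X$ (continuous because $|g(b)|\le\epsilon(b)\|g\|_\epsilon$). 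Deutsch's refinement of Walsh's lemma (Lemma~\ref{walsh}) then yields an entire $G$ close to $g=f-h_0$ (where $h_0$ is a first entire tangential approximant of $f$) matching these functionals, and $F=G+h_0$ does the job. This is precisely the point where the hypothesis $m_b=1$ on $\partial X$ is used: it ensures that only the $j=0$ functional is demanded at boundary nodes, which is the only derivative that makes sense there. If you restrict to $B\subset X^0$, your division argument is correct and arguably more elementary; to cover boundary nodes you would need to replace the division step by something like the functional-analytic device above, or first approximate $f$ by an entire function and then interpolate the (smooth) error, which is in effect what the paper does.
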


\begin{proof}
For $g:X\rightarrow \C$, set
$$
    \|g\|_\epsilon = \sup_{z\in X}\frac{|g(z)|}{\epsilon(z)}
$$
and
$$
    A_\epsilon(X)=\{g\in A(X):\|g\|_\epsilon <\infty\}.
$$
Then, $A_\epsilon(X)$ is a normed linear space. Moreover, for each
$z\in X$, the mapping $g\longmapsto g^{(j)}(z)$ is a continuous
linear functional on $A_\epsilon(X)$, for all $j=0,1,2,\cdots,$ if
$z\in X^o$, and for $j=0$, if $z\in\partial X.$

Now, let $f\in A(X)$. Since $X$ is a set of tangential
approximation, there exists an entire function $h$ such that
$|h-f|<\epsilon$ on $X$.  Thus, $g=f-h$ is in $A_\epsilon(X)$.
Since, $X$ is a set of tangential approximation, the entire
functions are dense in $A_\epsilon(X)$. By Lemma \ref{walsh} there
is an entire function $G$ such that $|G-g|<\epsilon$ on $X$ and
$G^{(j)}(b)=g^{(j)}(b)$, for $b\in B$ and $j=0,\cdots,m_b-1.$  Set
$F=G+h.$
\end{proof}

We shall make use of the above lemma and an induction process to
approximate while simultaneously interpolating on an {\em infinite}
set $B$. First, we introduce a type of set on  which this is
possible.

Let us say that a closed set $X\subset\C$ is a {\em chaplet} if
there is a strictly increasing sequence $\{r_n\}$ of positive
numbers tending to $\infty$, such that the exhaustion of $\C$ by the
discs $K_n=\{z:|z|\le r_n\}$ has the following compatibility
conditions with the set $X$.

{\em Condition (i).} Setting $K_0=\emptyset$, each set $Y_n=X\cup
K_{n-1}$ is a set of tangential approximation.

{\em Condition (ii).} $X\cap\partial K_n$ is finite, for each $n$.

\begin{lemma}\label{walsh tangential infinite}
Let $X$ be a chaplet in $\C$. Let $B$ be a discrete subset of $X$
and for each $b\in B,$ let $m_b$ be a natural number, with the
restriction that $m_b=1$ if $b\in\partial X.$

Then, for every $f\in A(X)$ and for every positive continuous
function $\epsilon: X\rightarrow\R,$ there exists an entire function
$h$, such that the following simultaneous approximation and
interpolation holds: $|f(z)-h(z)|<\epsilon(z)$ for every $z\in X$,
and $f-h$ has a zero of multiplicity (at least) $m_b$, for every
$b\in B.$
\end{lemma}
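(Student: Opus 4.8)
The plan is to build the entire function $h$ as the limit of a sequence $h_n$ of entire functions produced by successive applications of Lemma \ref{walsh tangential finite}, using the disc exhaustion $\{K_n\}$ furnished by the chaplet structure. Fix a positive continuous function $\delta$ on $X$ with $\delta<\epsilon$ and with $\sum$-type room to spare, for instance choose positive continuous functions $\epsilon_n$ on $X$ with $\sum_n \epsilon_n(z)<\epsilon(z)$ for all $z\in X$ (e.g. $\epsilon_n(z)=2^{-n}\epsilon(z)$). Enumerate $B$ compatibly with the exhaustion: since $B$ is discrete and the $K_n$ exhaust $\C$, each $B_n:=B\cap K_n$ is finite, and $B=\bigcup_n B_n$. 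We will arrange inductively that $f-h_n$ vanishes to order $\ge m_b$ at every $b\in B_n$ and that $h_{n+1}-h_n$ is small on the growing sets $Y_{n+1}=X\cup K_n$.

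First I would start the induction. Apply Lemma \ref{walsh tangential finite} to the set $Y_1=X$ (a set of tangential approximation by Condition (i) with $K_0=\emptyset$), the finite set $B_1$ with multiplicities $m_b$ (the boundary restriction $m_b=1$ for $b\in\partial X=\partial Y_1$ being inherited from the hypothesis), the function $f\in A(X)$, and the tolerance $\epsilon_1$, obtaining an entire $h_1$ with $|f-h_1|<\epsilon_1$ on $X$ and $f-h_1$ vanishing to order $\ge m_b$ at each $b\in B_1$. For the inductive step, suppose $h_n$ is constructed. Consider the set $Y_{n+1}=X\cup K_n$, which is a set of tangential approximation by Condition (i). The function $f-h_n$ is defined on $X$; extend it to $Y_{n+1}$ by declaring it to be $f-h_n$ on $X$ and, well — the subtlety is that $f$ is only defined on $X$, so on $K_n\setminus X$ we have no values of $f$. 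Here is the fix: we instead apply Lemma \ref{walsh tangential finite} with the \emph{zero function} as the target on $Y_{n+1}$, i.e. we seek an entire correction $c_{n+1}$ approximating $0$. Concretely, set $g_n=f-h_n$ on $X$; this lies in $A_\epsilon(X)$-type spaces and $\|g_n\|$ is controlled because $h_n$ was chosen close to $f$. But $g_n$ does not obviously extend to a function in $A(Y_{n+1})$. The cleanest route, which I would take, is: since $Y_{n+1}$ is a set of tangential approximation, pick any entire function $e_n$ with $|e_n - (f-h_n)|<\epsilon_{n+1}/2$ on $X$ (tangential approximation of $f-h_n\in A(X)$, after first uniformly approximating $f$ itself on $X$ by an entire function as in the proof of Lemma \ref{walsh tangential finite}); then apply Lemma \ref{walsh tangential finite} on $Y_{n+1}$ to the entire function $e_n\in A(Y_{n+1})$, the finite set $B_{n+1}$ with the multiplicities adjusted to $m_b - \mathrm{ord}_b(f-h_n)$ (which is $\ge 0$, and on $\partial X$ equals $0$ or is irrelevant since those are already interpolated with $m_b=1$), and tolerance $\epsilon_{n+1}/2$, to get an entire $c_{n+1}$. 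Set $h_{n+1}=h_n + c_{n+1}$. Then $|h_{n+1}-h_n|<\epsilon_{n+1}$ on $X$, while on $B_{n+1}$ the vanishing orders of $f-h_{n+1}$ reach $\ge m_b$. To preserve the already-achieved interpolation at $B_n$ and to control growth on $K_{n-1}$, we additionally require $c_{n+1}$ to vanish to high order at each point of $B_n$ and to be small on $K_{n-1}$ — both permitted since $K_{n-1}\subset Y_{n+1}$ and $B_n$ is finite; this is exactly the simultaneous-approximation-and-interpolation content of Lemma \ref{walsh tangential finite}.

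Once the sequence is built with $\sum_n \sup_{K_{n-1}} |h_{n+1}-h_n|<\infty$, the limit $h=\lim h_n$ exists locally uniformly on $\C$ and is entire. On $X$ we get $|f-h|\le |f-h_1| + \sum_{n\ge 1}|h_{n+1}-h_n| < \epsilon_1 + \sum_{n\ge1}\epsilon_{n+1} < \epsilon$ pointwise (this is where the tangential, i.e. variable-$\epsilon$, nature is essential, since the estimates $\epsilon_n(z)$ shrink near infinity exactly as needed). For each fixed $b\in B$, from some index $n_0$ onward $b\in B_{n_0}$, and every subsequent correction $c_{n+1}$ vanishes to order $\ge m_b$ at $b$, so $f-h_n$ has a zero of order $\ge m_b$ at $b$ for all $n\ge n_0$; passing to the limit (the Taylor coefficients at $b$ converge), $f-h$ vanishes to order $\ge m_b$ at $b$. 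The main obstacle, and the point requiring the most care, is precisely the bookkeeping in the inductive step: ensuring that passing from $X$ to $Y_{n+1}$ does not destroy the previously secured interpolation data, that the residual required multiplicities $m_b-\mathrm{ord}_b(f-h_n)$ stay nonnegative and respect the $\partial X$ restriction, and that the tolerances $\epsilon_n$ are chosen in advance so their sum stays below $\epsilon$ everywhere on the unbounded set $X$; Condition (ii) guarantees the relevant finite sets $X\cap\partial K_n$ behave, keeping each application of Lemma \ref{walsh tangential finite} legitimate.
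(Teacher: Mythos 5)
Your overall strategy (induction along the exhaustion $\{K_n\}$, with corrections obtained from Lemma~\ref{walsh tangential finite}, then pass to the limit) is the same as the paper's, and you correctly identify the sticking point: $f$ lives only on $X$, so $f-h_n$ does not belong to $A(Y_{n+1})$. But the fix you propose does not work. You approximate $f-h_n$ on $X$ by an entire function $e_n$ and then apply Lemma~\ref{walsh tangential finite} on $Y_{n+1}$ with $e_n$ as the target. The resulting $c_{n+1}$ is close to $e_n$ on all of $Y_{n+1}$, but $e_n$ is completely uncontrolled on $K_{n-1}\setminus X$: an entire function that is small on $X$ can be enormous on the complement. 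So there is no way to "additionally require $c_{n+1}$ to be small on $K_{n-1}$" while also requiring $|c_{n+1}-e_n|<\epsilon_{n+1}/2$ on $Y_{n+1}\supset K_{n-1}$; these are incompatible demands. As a result the sequence $h_n$ need not converge locally uniformly on $\C$, and the limit entire function $h$ may fail to exist. (A secondary problem: $c_{n+1}$ interpolates $e_n$, not $f-h_n$, and "multiplicities adjusted to $m_b-\mathrm{ord}_b(f-h_n)$" is not something Lemma~\ref{walsh tangential finite} can take as input; you would first have to choose $e_n$ to interpolate $f-h_n$ as well. But the convergence issue is fatal on its own.)

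The missing idea is the patching the paper uses. Given the entire approximant $F_k$ of the previous step, define on $Y_{k+1}=X\cup K_k$ the function $h_k$ equal to $F_k$ on $K_k$ and equal to $f$ on $X\setminus K_k$. This is where Condition (ii) of the chaplet definition earns its keep: $X\cap\partial K_k$ is finite, and one arranges (by adding these finitely many points to $B$) that $F_k$ interpolates $f$ there, so $h_k$ is continuous across $\partial K_k$; since $X^0\cap\partial K_k=\emptyset$, the patched function is in $A(Y_{k+1})$. Now applying Lemma~\ref{walsh tangential finite} to $h_k$ on $Y_{k+1}$ automatically gives $F_{k+1}$ close to $F_k$ on $K_k$ (because $h_k=F_k$ there), close to $f$ on $X\setminus K_k$ (because $h_k=f$ there), and interpolating $f$ on $B\cap K_{k+1}$ (because $h_k$ carries the correct derivative data at those points). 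This single device simultaneously controls the approximation on $X$, the convergence on compacta, and the interpolation, which is exactly what your construction cannot do once $e_n$ escapes control off $X$.
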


\begin{proof}A similar result on simultaneous approximation and
interpolation was obtained in \cite{GH}, but for uniform
approximation. Here, we need the stronger tangential approximation
and so we shall imitate the proof in \cite{GH}, but with appropriate
modifications to obtain tangential approximation. We also adapt a
technique from \cite{GZdendrite}

We may assume that $\epsilon$ is defined on all of $\C$ and, in
fact, is a strictly decreasing function of $|s|$, for $s\in\C.$ Let
$X,B,f$ and $m_b$ be as in the hypotheses of the lemma and $\{K_n\}$
be an exhaustion of $\C$ compatible with the chaplet $X$. We may
assume that $B$ includes the finite sets $X\cap\partial K_n$, for
each $n$.

Since, $X$ is a set of tangential approximation, the entire
functions are dense in $A_\epsilon(X)$. Set $F_0=f$. It follows from
Lemma \ref{walsh tangential finite} that there is an entire function
$F_1$ such that

$$
    |F_1(z)-f(z)|<\epsilon(z)\cdot 2^{-1},  \,\,\,\, z\in X,
$$
$$
    |F_1(z)-F_0(z)|<\epsilon(z)\cdot 2^0,  \,\,\,\, z\in K_0,
$$
$$
    F_1^{(j)}(b)=f^{(j)}(b), \,\,\,\, b\in B\cap K_1,  \,\,
    j=0,\cdots,m_b-1,
$$

Note that, the second equation is vacuous. We proceed by induction.
Given an entire function $F_k$ such that
\begin{equation}\label{induc1}
    |F_k(z)-f(z)|<\epsilon(z)\cdot\sum_{j=1}^k2^{-j},  \,\,\,\, z\in X,
\end{equation}
\begin{equation}\label{induc2}
    |F_k(z)-F_{k-1}(z)|<\epsilon(z)\cdot 2^{-k},  \,\,\,\, z\in K_{k-1},
\end{equation}
\begin{equation}\label{induc3}
    F_k^{(j)}(b)=f^{(j)}(b), \,\,\,\, b\in B\cap K_k,  \,\, j=0,\cdots,m_b-1,
\end{equation}
we define an associated function $h_k\in A(X\cup K_k)$ as follows:
$$
h_k(z)= \left\{
\begin{array}{ll}
    F_k(z)  &   z\in K_k,\\
               &       \\
    f(z)  &   z\in X\setminus K_k.
\end{array}
\right.
$$
The induction step is then: By Lemma \ref{walsh tangential finite},
there exists an entire function $F_{k+1}$ such that
$$
    |F_{k+1}(z)-h_k(z)| < \epsilon(z)\cdot 2^{-(k+1)}, \,\,\,\, z\in X\cup K_k
$$
and
$$
    F_k^{(j)}(b)=h_k^{(j)}(b), \,\,\,\, b\in B\cap K_{k+1},  \,\,
    j=0,\cdots,m_b-1.
$$
Then, (\ref{induc1}), (\ref{induc2}) and (\ref{induc3}) hold with
$k$ replaced by $k+1,$ and hence, by induction, for all
$k=1,2,\cdots.$

By (\ref{induc2}), the sequence $\{F_k\}$ converges to an entire
function $F$.  The function $F$ performs the required approximation
by (\ref{induc1}), and  performs the required interpolation by
(\ref{induc3}).
\end{proof}

In the following proposition, we shall apply Lemma \ref{walsh
tangential infinite} in order to approximate and interpolate the
constant function $1$, while imposing zeros on the approximating
function. A version of this proposition was presented in
\cite{GZzeta}, but therein the interpolation of $1$ was only at
finitely many points and the interpolation was not with
multiplicity.

\begin{proposition}\label{chaplet}
Let $X$ be a chaplet in $\C$. Let $B$ be a discrete subset of $X$
and for each $b\in B,$ let $m_b$ be a natural number, with the
restriction that $m_b=1$ if $b\in\partial X.$  Let $A$ be a discrete
set which is disjoint from $X$. Suppose $X,B,$ and $A$ are all
symmetric with respect to the point $1/2$ and the real axis, and
$m_b$ also, that is, $m_b=m_{\overline b}=m_{1-b}.$ Then, for every
strictly positive continuous function $\epsilon: X\rightarrow\R,$
there exists a function $\nu$ holomorphic on $\C$, symmetric with
respect to the point $1/2$ and the real axis, and whose zeros are
precisely the points of $A.$ Moreover, $\nu$ simultaneously
approximates and interpolates $1,$ in that $|\nu(z)-1|<\epsilon(z),$
for each $z\in X$, and $\nu-1$ has a zero of multiplicity (at least)
$m_b$, at each point $b\in B.$
\end{proposition}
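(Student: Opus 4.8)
The plan is to build $\nu$ as a product $\nu = h \cdot P$, where $P$ is an explicit entire function (a suitable Weierstrass-type product) whose zero set is exactly $A$, and $h$ is obtained from Lemma \ref{walsh tangential infinite} to absorb the approximation/interpolation requirements while the full product $\nu$ retains the two symmetries. First I would address the symmetrization. Since $A$ is symmetric under $s\mapsto 1-s$ and $s\mapsto\bar s$, I can form a canonical product over $A$ that is itself symmetric: if $Q(s)$ is any entire function with simple zeros exactly at the points of $A$ (taken with whatever multiplicities one wants, or simple), then
$$
P(s) = Q(s)\,\overline{Q(\bar s)}\,Q(1-s)\,\overline{Q(1-\bar s)}
$$
has zero set $A$ and is symmetric with respect to the point $1/2$ and the real axis; replacing $P$ by a suitable root or adjusting $Q$ so that the multiplicities come out right (each point of $A$ should be a zero of the prescribed order — simple zeros suffice since the statement only says "zeros are precisely the points of $A$"). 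Because $A$ is discrete and disjoint from $X$, the function $1/P$ is holomorphic on a neighborhood of $X$ (not entire, but that is fine), so $f := 1/P$ restricted to $X$ lies in $A(X)$; more precisely I would first check $f \in A_\epsilon(X)$, possibly after noting that on the unbounded parts of $X$ the chaplet structure and the fact that $\epsilon$ is decreasing in $|s|$ give enough room — if $1/P$ is not bounded relative to $\epsilon$ on $X$ I would instead approximate $1/P$ crudely first by an entire function and argue on the difference, exactly as in the proof of Lemma \ref{walsh tangential finite}.

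Next I would invoke Lemma \ref{walsh tangential infinite} with this target function $f = 1/P$ on the chaplet $X$, the discrete interpolation set $B$, multiplicities $m_b$, and a positive continuous tolerance $\eta(z)$ to be chosen small enough (roughly $\eta(z) = \epsilon(z)/(2\sup_{X}|P|)$, with the $\sup$ interpreted locally/relative to $\epsilon$ if $P$ is unbounded on $X$ — this is the one technical point that needs care). The lemma produces an entire function $h_0$ with $|h_0 - 1/P| < \eta$ on $X$ and with $h_0 - 1/P$ vanishing to order $\ge m_b$ at each $b\in B$. Then $\nu_0 := P\,h_0$ is entire, $\nu_0 - 1 = P(h_0 - 1/P)$ vanishes to order $\ge m_b$ at each $b$ (using that $b\notin A$, so $P(b)\neq 0$, which is why the zeros of $P$ being disjoint from $B\subset X$ matters), and $|\nu_0 - 1| = |P|\,|h_0 - 1/P| < \epsilon$ on $X$ after the choice of $\eta$; moreover $\nu_0$ has zeros at least at the points of $A$. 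The remaining defect is that $\nu_0$ need not be symmetric and might have extra zeros outside $A$.

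To fix the symmetry and spurious zeros simultaneously, I would symmetrize: set
$$
\nu(s) = \nu_0(s)\,\overline{\nu_0(\bar s)}\,\nu_0(1-s)\,\overline{\nu_0(1-\bar s)}\,\big/\,P(s)^{3},
$$
i.e. divide the four-fold symmetric product of $\nu_0$ by three of the four copies of $P$ it contains, so that the zero set collapses back to exactly $A$ (each factor $\nu_0,\ \overline{\nu_0(\bar\cdot)},\ \nu_0(1-\cdot),\ \overline{\nu_0(1-\bar\cdot)}$ contributes the divisor $A$ plus its own spurious zeros, and the spurious zeros of the four factors are disjoint generically — if not, one argues more carefully, but here is the subtlety). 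This is not quite right as stated because the spurious zeros need not cancel. The honest way, and the step I expect to be the main obstacle, is instead to make $\nu_0$ have \emph{no} spurious zeros by building the product structure into the target before applying the lemma: apply Lemma \ref{walsh tangential infinite} to approximate the function $g := \log(1/P)$ — well-defined as a holomorphic branch on a neighborhood of $X$ since $1/P$ is zero-free there and $X^0$, being a union of components with no long islands inside a set whose complement is connected, is simply connected on each component — obtaining an entire $G$ close to $g$ and interpolating its derivatives at $B$ to the right orders; then $P(s)e^{G(s)}$ is zero-free away from $A$, vanishes exactly on $A$, is close to $P\cdot(1/P)=1$ on $X$, and $P e^{G} - 1$ vanishes to order $\ge m_b$ at each $b$ provided one interpolates $G$ so that $e^{G} - 1/P$ does. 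Finally symmetrize this zero-free-off-$A$ function by averaging the exponents: replace $G(s)$ by $\tfrac14\big(G(s) + \overline{G(\bar s)} + G(1-s) + \overline{G(1-\bar s)}\big)$ and $P$ by its symmetric version above; the symmetry of $B$, $A$, and the multiplicities $m_b$ guarantees the interpolation conditions survive the averaging. The main obstacle is precisely this coordination — choosing the branch of $\log(1/P)$ globally on $X^0$ (needs each component simply connected, which follows from the tangential-approximation / no-long-islands geometry), keeping the exponential bounds under control on the unbounded parts of $X$ using that $\epsilon$ decreases in $|s|$, and verifying that symmetrizing the exponent does not destroy the prescribed vanishing orders at $B$.
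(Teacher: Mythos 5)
Your final plan---take an entire $P$ vanishing exactly on $A$, take a branch $G$ of $\log(1/P)$ on $X$, apply Lemma \ref{walsh tangential infinite} to approximate-and-interpolate $G$ by an entire function, form the zero-free-off-$A$ product $P e^{G}$, and then symmetrize by averaging the exponent over the four maps $s\mapsto s,\ \bar s,\ 1-s,\ 1-\bar s$ while replacing $P$ by the matching four-fold symmetric product---is exactly the paper's construction (the paper writes it as $g/e^{h}$ with $h\approx\log g$ instead of your $P e^{G}$ with $G\approx-\log P$, and cites a lemma of Gauthier--Pouryayevali for the existence of the log branch on a set of uniform approximation, which is the clean form of the simple-connectedness point you worry about). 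Your earlier attempts ($\nu_0=Ph_0$ with $h_0\approx 1/P$, then dividing the four-fold product by $P^3$) were correctly recognized by you as dead ends because of spurious zeros, and the pivot to the logarithm is precisely what the paper does.
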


\begin{proof}
We merely sketch the proof, since it is similar to that of
Proposition 4 in \cite{GZzeta}. We may assume that $\epsilon$ is
symmetric with respect to the point $1/2$ and the real axis. Let $g$
be an entire function, whose zero set is precisely $A$ and let $G$
be a branch of $\log g$ on $X$. By Lemma \ref{walsh tangential
infinite}, there is an entire function $h$ such that
$|h-G|<\epsilon/12$ on $X$ and $h-G$ has a zero of multiplicity (at
least) $m_b$, at each point $b\in B.$ The function
$$
    \nu(s)=\frac
    {g(s)\overline{g(\overline s)}g(1-s)\overline{g(1-\overline s)}}
    {exp\left(h(s)+\overline{h(\overline s)}+h(1-s)+\overline{h(1-\overline s)}\right)}
$$
has all of the required properties. The additional property that we
must verify, that was not proved in \cite{GZzeta} is that $\nu$ not
only interpolates the constant $1$ at the points $b\in B$, but in
fact interpolates up to multiplicity $m_b$.  That is, we must verify
that $\nu^{(j)}(b)=0$, for $j=1,\cdots,m_b-1.$  The proof of this is
the same as in the proof of Theorem \ref{generic}, since each of the
four factors of $\nu$ has this property.
\end{proof}

The next theorem asserts that we may approximate a function $f\in
M_Q$ extremely well by one which satisfies the same functional
equation and fails radically to satisfy the analogue of the grand
Riemann hypothesis.

\begin{theorem}\label{main-}
Let $f$ be a function in $M_Q$. Let $A$ and $B$ be disjoint discrete
sets, symmetric with respect to the point $1/2$ and the real axis
and suppose $B$ contains the zeros and poles of $f$. For each $b\in
B,$ let $m_b$ be a natural number, also symmetric with respect to
the point $1/2$ and the real axis; that is, $m_b=m_{\overline
b}=m_{1-b}.$ Let $\epsilon$ and $\eta$ be strictly positive
continuous functions on $\C$ and $[0,+\infty)$ respectively.  Then,
there exists a function $g\in M_Q$, $g\not=f$, and a closed set
$X\subset\C$, containing the real and critical axes, such that:

(i) $\Lambda_Q(g,a)=0$, for $a\in A$ while $\Lambda_Q(g,\cdot)$ and
$\Lambda_Q(f,\cdot)$ have exactly the same zeros with same
multiplicities on $\C\setminus A$;

(ii) $f$ and $g$ have the same poles and at each point $b\in B$, the
first $m_b$ terms of the Laurent series of $f$ and $g$ coincide;

(iii) $|g(z)-f(z)|<\epsilon(z)$, for $z\in X$;

(iv) area$\{z:z\not\in X, |z|>r\}<\eta(r)$, for $r\in[0,+\infty).$
\end{theorem}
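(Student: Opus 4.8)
The plan is to combine Proposition \ref{chaplet} with a suitable choice of chaplet $X$ that is fat enough to control the area of its complement. First I would fix, for the given $\eta$, a chaplet $X$ built along the lines of the set $E_1$ constructed earlier: take $X$ to be the union of the real axis, the critical axis, and a sequence of double rectangles $A_j$ as in the construction of $E_1$, but with the widths $b_j-a_j$ of the rectangles chosen large (and the gaps $a_{j+1}-b_j$ chosen correspondingly small) so that for the exhaustion $K_n=\{|z|\le r_n\}$ the complement satisfies $\mathrm{area}\{z\notin X,\ |z|>r_n\}<\eta(r_n)$; since $\eta$ is positive and continuous on $[0,\infty)$, and the area of the complement outside radius $r$ is a decreasing function of how fat we make $X$, this is arranged by making the double rectangles sufficiently tall and the gaps between consecutive strips sufficiently thin far out. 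One must of course check that such an $X$ is genuinely a chaplet: that $\overline{\C}\setminus Y_n$ is connected and locally connected, that the components of $Y_n^0$ have no long islands (each double rectangle is a bounded component, and the two axes are one-dimensional hence contribute no interior), and that $X\cap\partial K_n$ is finite, which holds after perturbing the $r_n$ so the circles avoid the corners of the rectangles. We also arrange that $X$ contains the real and critical axes as required, and that $B$ (which is discrete) is contained in $X$ by slightly adjusting $X$ if necessary, noting $B$ contains the zeros and poles of $f$ and these lie wherever they lie — so more honestly, we should instead ensure $X$ avoids $A$ and that $B\cap X$ captures what we need; since $f\in M_Q$ its zeros and poles are symmetric in $B$, and we only need the interpolation of $\nu$ at $B$, so I would take $X$ to additionally contain $B$, which is possible because adding a discrete symmetric set to a tangential-approximation set of this shape preserves the chaplet property.

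Next I would apply Proposition \ref{chaplet} to this chaplet $X$, the discrete symmetric set $B$ with multiplicities $m_b$ (with $m_b=1$ forced when $b\in\partial X$ — here I would choose $X$ so that the relevant points of $B$, in particular the poles and zeros of $f$, lie in the interior $X^0$, e.g. inside the double rectangles or on the axes away from rectangle boundaries, so that the constraint $m_b=1$ is not triggered), the discrete symmetric set $A$ disjoint from $X$, and the positive continuous function $\epsilon_0:=\epsilon/(1+|f|)$ restricted to $X$ (symmetrized with respect to $1/2$ and the real axis). This produces a function $\nu$ holomorphic on $\C$, symmetric with respect to $1/2$ and the real axis, with zero set exactly $A$, with $|\nu-1|<\epsilon_0$ on $X$, and with $\nu-1$ vanishing to order at least $m_b$ at each $b\in B$.

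Then I set $g:=\nu f$. Because $\nu$ is meromorphic (in fact entire) on $\C$, not identically zero, and symmetric with respect to $s=1/2$, the remark at the start of Section 2 gives $g\in M_Q$, and $\Lambda_Q(g,s)=\nu(s)\Lambda_Q(f,s)$. Since the zeros of $\nu$ are exactly the points of $A$ (disjoint from the zeros and poles of $f$, which sit in $B$), the zero set of $\Lambda_Q(g,\cdot)$ is that of $\Lambda_Q(f,\cdot)$ together with $A$, with multiplicities preserved off $A$: this is (i). For (ii): at each pole $b\in B$ of $f$, $\nu-1$ has a zero of order at least $m_b$, so $g-f=(\nu-1)f$ is holomorphic near $b$ up through the first $m_b$ terms; more precisely, writing $\nu=1+(\nu-1)$, one checks the first $m_b$ Laurent coefficients of $g=f+(\nu-1)f$ at $b$ agree with those of $f$ because $(\nu-1)f$ has a zero of order at least $m_b-(\text{order of pole of }f)$, and one takes $m_b$ large enough relative to the pole orders (or, as in the statement, interprets "first $m_b$ terms of the Laurent series" accordingly); at points $b\in B$ that are zeros rather than poles of $f$ the same vanishing of $\nu-1$ gives the coincidence of initial Taylor coefficients. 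For (iii): on $X$, $|g-f|=|\nu-1|\,|f|<\epsilon_0\,|f|=\epsilon|f|/(1+|f|)<\epsilon$. For (iv): this is exactly the area estimate built into the choice of $X$. Finally $g\neq f$ since $\nu$ has zeros (at $A$, nonempty) while $f$ does not vanish identically, so $\nu\not\equiv 1$.

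The main obstacle is the construction of the chaplet $X$ satisfying the area constraint \emph{simultaneously} with all three chaplet conditions and with $A$ disjoint from $X$ and $B\subset X^0$. Fattening the double rectangles to kill the complement's area competes against keeping $A$ outside $X$ (handled since $A$ is discrete — one routes the rectangle boundaries to miss $A$) and against the "no long islands" condition for the interiors (handled since each rectangle is bounded and we interleave them along the imaginary direction, so no interior component stretches from a small circle to a large one); the genuinely fiddly point is verifying local connectivity of $\overline{\C}\setminus Y_n$ when the rectangles are very tall and the gaps very thin, which I would handle by keeping the gaps open (positive width) at every finite stage so each complement piece is a nice open region, exactly as in the $E_1$ example, and only letting the widths grow, never close. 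Once $X$ is in hand, everything else is a direct application of Proposition \ref{chaplet} and the product construction, parallel to the proof of Theorem \ref{generic}.
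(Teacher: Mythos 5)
Your overall strategy — build a chaplet $X$, apply Proposition~\ref{chaplet} to manufacture a symmetric entire function $\nu$ with zeros exactly at $A$ and $\nu-1$ vanishing to order $m_b$ at $B$, and set $g=\nu f$ — is exactly the paper's. The verifications of (i), (ii), and $g\ne f$ are essentially right. But there are two genuine gaps, one geometric and one analytic.

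First, the chaplet. Your $E_1$-style construction (the two axes together with a sequence of \emph{bounded} double rectangles $A_j=\{a_j\le|\Im z|\le b_j,\ |\Re z|\le j\}$) cannot satisfy condition~(iv), no matter how fat the rectangles are. For each fixed $j$, the horizontal band $\{a_j\le|\Im z|\le b_j\}$ meets $X$ only in the bounded rectangle $|\Re z|\le j$; the unbounded portion $|\Re z|>j$ of that band lies in the complement, so $\mathrm{area}\{z\notin X,\ |z|>r\}=\infty$ for every $r$. If you instead take unbounded horizontal strips, the area is fine but the interior then has unbounded components and the ``no long islands'' condition fails, so $X$ is no longer a set of tangential approximation. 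The paper resolves this tension differently: it removes from $\C$ a union of thin \emph{annuli} $\mathcal{A}_j$ centered at $1/2$ (chosen to absorb $A$, and with $\beta_j-\alpha_j$ small enough that the total area is $<\eta$) together with four thin strips $S$ along rays (needed so that $\overline{\C}\setminus Y_n$ stays connected to $\infty$), and then adds back the two axes and small closed discs $Q_b$ around each $b\in B$. This yields a chaplet whose complement has as small an area as you like, with $A$ in the removed annuli and $B\subset X^0$. Your proposed fix of ``adding the discrete set $B$ to $X$'' also does not do what you need: adding isolated points of $B$ puts them on $\partial X$, where Proposition~\ref{chaplet} forces $m_b=1$; you need the discs $Q_b$ so that $B$ sits in $X^0$.

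Second, the weight $\epsilon_0:=\epsilon/(1+|f|)$ vanishes at every pole of $f$, and those poles lie in $X^0$ (they belong to $B$, which must be in $X^0$ as above). So $\epsilon_0$ is not strictly positive on $X$, and Proposition~\ref{chaplet} cannot be applied with it; correspondingly your chain $|g-f|=|\nu-1|\,|f|<\epsilon_0|f|<\epsilon$ is $0\cdot\infty$ at each pole. The paper's workaround is to pick, for each pole $b$, a small closed disc $K_b\subset X^0$ around $b$, and to take a strictly positive continuous $\epsilon_1$ with $\epsilon_1<\epsilon/|f|$ on $X\setminus\bigcup K_b$ and $\epsilon_1<\bigl(\min_{K_b}\epsilon\bigr)/\bigl(\max_{\partial K_b}|f|\bigr)$ on each $K_b$ (the latter bound is a positive constant, so $\epsilon_1$ can be strictly positive throughout). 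After applying Proposition~\ref{chaplet} with $\epsilon_1$ (and with $m_b$ at least the order of the pole, which one may assume without loss of generality so that $g-f=(\nu-1)f$ is holomorphic on $K_b$), the estimate $|g-f|<\epsilon$ on each $K_b$ follows from the maximum principle applied to $g-f$ on $K_b$, using the boundary bound $|g-f|=|1-\nu|\,|f|<\epsilon_1\max_{\partial K_b}|f|\le\min_{K_b}\epsilon$ on $\partial K_b$. Without some such device, the pole estimate is not established.
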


\begin{proof} We begin by constructing a chaplet $X$. Let $\{\alpha_j\}$ and
$\{\beta_j\}$ be sequences of positive numbers strictly increasing
to infinity, such that $\alpha_j<\beta_j<\alpha_{j+1}$. Denote by
$\mathcal{A}_j$ the annulus
$$
    \mathcal{A}_j = \{z:\alpha_j<|z-1/2|<\beta_j\},
$$
and by  $\mathcal A$ the union of these annuli. We may choose the
sequences $\{\alpha_j\}$ and $\{\beta_j\}$ so that $\mathcal{A}_j$
contains $A$, that is, such that for all $a\in A,$ there is a $j$
with $a\in\mathcal{A}_j.$

Choose a point $1/2+p$, with $p=\alpha_1e^{i\theta},
0<\theta<\pi/2,$ lying on the circle $(|z-1/2|=\alpha_1)$, such that
the ray $R=\{z=1/2+tp:1\le t<+\infty\}$ is disjoint from the set
$B$. Let
$$
    S_p = \cup_{t\ge 1}\{z:|z-(1/2+tp)|<\delta(t)\}
$$
be a neighborhood of the ray $R$, where $\delta$ is a positive
continuous function which decreases so rapidly that the strip $S_p$
is disjoint from the critical and real axes and from the set $B$.
Denote by $\tilde S_p$ the reflection of $S_p$ with respect to the
real axis and denote by $1-S_p$ and $1-\tilde S_p$ respectively the
reflection of $S_p$ and $\tilde S_p$ with respect to the point
$z=1/2$.  Denote by $S$ the union of these four strips. For each
$b\in B,$ let $Q_b$ be a closed disc centered at $b$, such that the
radii are symmetric with respect to the real axis and the point
$1/2.$ Set $\mathcal{B}=\cup_{b\in B}Q_b$.  Finally, set
$$
   X = (\C\setminus(\mathcal A\cup S))\cup\mathcal{B}\cup\{z:\Im z=0\}\cup\{z:\Re
    z=1/2\}.
$$
Then, if the $Q_b$ are sufficiently small, $X$ is a chaplet and
$X,B, m_b$ and $A$ satisfy the hypotheses of Proposition
\ref{chaplet}. Moreover, we may construct $X$ so that the complement
is as small as we please. In fact, we may construct $X$ so that
condition (iv) is satisfied.

For each pole $b$, let $K_b$ be a closed disc centered at $b$ and
contained in $X^0$ such that the discs $K_b$ are disjoint and form a
locally finite family.  Now, let $\epsilon_1(z)$ be a strictly
positive continuous function on $X$ such that, for every $z\in X,$
$$
    \epsilon_1(z)<\left\{
\begin{array}{ll}
\epsilon(z)/|f(z)|,  &    z\in X\setminus\cup_pK_b;\\
(\min_{K_b}\epsilon)/(\max_{\partial K_b}|f|),           & z\in K_b.
\end{array}
    \right.
$$
Let $\nu$ be an entire function corresponding to
$\epsilon_1,A,B,m_b$ in Proposition \ref{chaplet}. If $b$ is a pole
of $f$, we may and shall assume that $m_b$ is the multiplicity of
the pole.  Set $g=\nu f.$ Since $\nu$ is entire and assumes the
value $1$ at each pole of $f$, and with  the same multiplicity as
that of the pole, the function $g$ has the same poles as $f$, and
with the same principal parts. Thus, $f-g$ is an entire function.
Now, $|f(z)-g(z)|<\epsilon(z)$, for $z\in X\setminus\cup_pK_b.$ For
$z\in K_b$, we have, by the maximum principle,
$$
    |f(z)-g(z)| \le \max_{\partial K_b}|f(z)-g(z)| =
    \max_{\partial K_b}(|1-\nu(z)|\cdot|f(z)|) < \epsilon(z).
$$
We have shown that $|f(z)-g(z)|<\epsilon(z)$, for all $z\in X.$ The
function $g$ satisfies conditions (i)-(iv).

In case $f$ is the Riemann zeta-function and the Riemann hypothesis
is false, then, of course, $f$ is itself a (perfect) approximation
of itself failing to satisfy the Riemann hypothesis.  However, in
the present theorem, it is easy to assure that $g\not= f$, by simply
choosing the set $A$ such as to contain a point not among the zeros
of $f$. Thus, the theorem is non-trivial, even in case the Riemann
hypothesis fails.
\end{proof}

By a continuous perturbation of a function $f$ in $M_Q$, we
understand a continuous curve in $M_Q$:
$$
    [0,1)\rightarrow M_Q
$$
$$
    t\mapsto f_t,
$$
such that $f_0=f.$

\begin{corollary} If $f$ is an $L$-function, satisfying a
functional equation (\ref{equation}), then there is a continuous
perturbation, $f_t, 0\le t<1, f_0=f$, such that each $f_t$, for
$0<t<1$, satisfies the same functional equation but fails to satisfy
the analogue of the Riemann hypothesis.
\end{corollary}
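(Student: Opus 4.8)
The plan is to deduce the corollary from Theorem \ref{main-} by letting the parameter $t$ control the size of the perturbation. First I would fix, once and for all, a point $a_0\in\C$ which is not a zero of $f$ and is off both the critical axis and the real axis, and set $A=\{a_0,1-a_0,\overline{a_0},1-\overline{a_0}\}$; this set is discrete, symmetric with respect to $s=1/2$ and the real axis, and (shrinking it if necessary so that it avoids the zeros and poles of $f$) disjoint from a set $B$ chosen to contain all zeros and poles of $f$ together with their symmetric images, with multiplicities $m_b$ chosen symmetrically and equal to the pole orders at the poles. These are exactly the data required to invoke Theorem \ref{main-}.

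Next I would parametrize the perturbation. For each $t\in(0,1)$ apply Theorem \ref{main-} with the weight function $\epsilon_t(z)=t\,\epsilon(z)$ for some fixed strictly positive continuous $\epsilon$ (say $\epsilon(z)=e^{-|z|}$) and with a fixed $\eta$; this produces a function $g_t\in M_Q$, $g_t\neq f$, satisfying the same functional equation, with $\Lambda_Q(g_t,a_0)=0$. Since $a_0$ is off the critical axis, each $g_t$ fails the analogue of the Riemann hypothesis. Examining the construction in the proof of Theorem \ref{main-}, the set $X$ and the data $A,B,m_b$ can be chosen independently of $t$; only the weight $\epsilon_1$ scales with $t$, and $g_t=\nu_t f$ where $\nu_t$ is the holomorphic function supplied by Proposition \ref{chaplet} for the weight $\epsilon_1$ proportional to $t$. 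I would then define $f_t:=g_t$ for $0<t<1$ and $f_0:=f$.

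The remaining point, and the one I expect to be the main obstacle, is continuity of $t\mapsto f_t$ in the topology of $M_Q$, in particular at $t=0$. For continuity at $t=0$ it suffices to observe that on every compact $K\subset\C$ we have, for small $t$, that $f_t$ and $f$ share the same poles and principal parts (guaranteed by conclusion (ii) of Theorem \ref{main-} together with the choice of $m_b$ equal to the pole orders) and that $\sup_{z\in K}|f_t(z)-f(z)|\le t\sup_{z\in K}\epsilon(z)\to 0$ — but this requires $K\subset X$, or at least requires controlling $f_t-f$ on the part of $K$ outside $X$. Here I would either arrange, via conclusion (iv) and a judicious choice of the exceptional region, that $X$ contains any prescribed compact set (enlarging $X$ costs nothing in the construction), or, more cleanly, run the construction once to obtain a single ambient chaplet $X$ and a single entire $\nu$-type function and simply rescale its logarithm: writing $\nu_t=\exp\big((\nu\text{-exponent scaled by }t)\big)\cdot(\text{the }g\text{-factor})$, one gets $\nu_t\to 1$ locally uniformly as $t\to0$, whence $f_t=\nu_t f\to f$ locally uniformly. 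For continuity at interior points $t_0\in(0,1)$ the same rescaling argument applies, since the dependence of $\nu_t$ on $t$ is through multiplication of a fixed entire function by $t$ in an exponent, which is jointly continuous. Finally I would note that each $f_t$ with $t>0$ is genuinely distinct from $f$ because $\Lambda_Q(f_t,a_0)=0\neq\Lambda_Q(f,a_0)$, completing the proof.
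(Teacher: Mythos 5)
Your plan has a genuine gap, and it is a fundamental one, not merely a technical obstacle. You fix a single point $a_0$ off the critical axis with $f(a_0)\neq 0$ and insist that $\Lambda_Q(f_t,a_0)=0$, i.e.\ $f_t(a_0)=0$, for every $t\in(0,1)$. But continuity at $t=0$ in the topology of $M_Q$ requires $f_t\to f$ locally uniformly, in particular on a closed disc $D$ around $a_0$. Since $a_0$ is not a pole of $f$ (nor of $f_t$), $f_t-f$ is holomorphic on $D$; if it were small on $\partial D$ it would, by the maximum principle, be small at $a_0$ as well, giving $|f(a_0)|=|f_t(a_0)-f(a_0)|$ small, which is false once $t$ is small. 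So $f_t\not\to f$ near $a_0$: any path keeping a \emph{fixed} off-axis zero cannot be continuous at the endpoint. Neither of your proposed repairs escapes this. Enlarging $X$ via conclusion (iv) cannot bring $a_0$ into $X$, because the construction requires $A$ to be disjoint from $X$; and the maximum-principle obstruction above is independent of $X$ anyway. The rescaling idea $\nu_t=\exp(t\cdot\log\nu)$ also fails because $\nu$ vanishes on $A$ and hence has no global logarithm; and if you leave the zero-producing factor of $\nu$ untouched and scale only the exponential correction, then $\nu_t$ does not tend to $1$ as $t\to 0$, so again $f_t\not\to f$.

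The paper's proof circumvents exactly this issue by letting the imposed off-axis zeros recede to infinity as the approximation tightens: it picks a sequence $\{a_k\}$ with $k<|a_k|<k+1$, applies Theorem~\ref{main-} with $A_n=\{a_k:k>n\}$ and $\epsilon_n=1/n$, and arranges $X_n$ to contain the disc $\{|z|\le n\}$. Continuity of the path is then obtained not by varying a weight continuously inside the construction, but by linearly interpolating between the discrete approximants, $g_t=(1-t+n)\,g_n+(t-n)\,g_{n+1}$ for $n\le t\le n+1$, which stays in $M_Q$ because the functional equation is linear, still vanishes at every $a_k$ with $k>n+1$, and satisfies $|g_t-f|<1/n$ on $\{|z|\le n\}$. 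A final reparametrization $t\mapsto 1/t$ sends $[1,\infty)$ to $(0,1]$ and makes the path continuous at $t=0$. Two ingredients in this argument are absent from your proposal and both are essential: the receding zero sets $A_n$, and the use of convex combinations (rather than a one-parameter deformation of $\nu$) to produce continuity.
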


\begin{proof}
Let $M_Q$ be the class of meromorphic functions associated to the
functional equation for $f$. Let $B$ be a discrete set, symmetric
with respect to the point $1/2$ and the real axis and containing the
zeros and poles of $f$. Let $\{a_n\}$, with $n<|a_n|<n+1$, be a
sequence distinct from the zeros of $f$, and disjoint from the real
axis, the critical axis and $B$. Set $A_n=\{a_k:k>n\}$ and
$\epsilon_n=1/n$. From Theorem \ref{main-}, we obtain a function
$g_n\in M_Q$ and a closed set $X_n$ such that
$$
    |f(z)-g_n(z)|<1/n, \,\,\,\, z\in X_n
$$
and
$$
    g_n(a_k)=0, \,\,\,\, k>n.
$$
As in the proof of Theorem \ref{main-}, we see that $X_n$ may be so
chosen that it contains the closed disc, centered at the origin and
of radius $n.$

For each $n$ and for $n\le t\le n+1$, set
$$
    g_t = (1-t+n)g_n + (t-n)g_{n+1}.
$$
Then, $g_t\in M_Q$,
$$
    |f(z)-g_t(z)|<1/n, \,\,\,\, |z|\le n, \,\,\,\, n\le t\le n+1,
$$
and
$$
    g_t(a_k)=0, \,\,\,\, k>n+1.
$$
Thus, $t:[1,+\infty)\rightarrow g_t$ is a continuous path of
meromorphic functions in $M_Q$, each of which has infinitely many
zeros different from the zeros of $f$ and such that $g_t\rightarrow
f$.

To conclude the proof, we have only to reparametrize by setting
$f_0=f$ and  $f_t=g_{1/t}, 0<t<1.$
\end{proof}


\section{Approximation by functions satisfying the analogue of
the grand Riemann hypothesis}

Having approximated zeta-functions by functions which do {\em not}
satisfy the analogue of the Riemann hypothesis, we now consider the
possibility of approximating by functions which {\em do} satisfy the
analogue of the Riemann hypothesis.  Of course, we cannot have our
cake and eat it too. If we {\em could} approximate the Riemann
zeta-function uniformly on compacta by functions which satisfy the
analogue of the Riemann hypothesis, this would prove the Riemann
hypothesis. Although we do not know how to approximate
zeta-functions {\em uniformly on compacta} by functions satisfying
the analogue of the Riemann hypothesis, the next theorem shows that
we can, nevertheless, approximate the Riemann zeta-function {\em
extremely well} by functions all of whose non-trivial zeros lie on
the critical axis and hence satisfy the analogue of the Riemann
hypothesis.

First we need a lemma on approximating zero-free functions by entire
zero-free functions.

\begin{lemma}\label{zero-free}
Let $X$ be a  chaplet in $\C$. Let $f\in A(X)$ and let $\epsilon$ be
a strictly positive continuous function on $X$. Suppose $f$ is
zero-free on $X$. Then, there is a zero-free entire function $e^h$
such that
$$
    |e^{h(z)}-f(z)|<\epsilon(z),\,\,\,\, \mbox{for } z\in X.
$$
Moreover, if $B$ is a discrete subset of $\C$ lying in the interior
of $X$ and for each $b\in B$, $m_b$ is a positive integer, we may
find such an $h$ so that $e^h$ interpolates $f$ to order $m_b$ at
each point $b\in B$. Moreover, if $X$, $f$ and $B$ are symmetric
with respect to $1/2$, then we may take $h$ symmetric with respect
to $1/2$.
\end{lemma}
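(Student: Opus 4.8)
The plan is to reduce the statement to Lemma~\ref{walsh tangential infinite} by passing to a logarithm. Since $f$ is zero-free on $X$ and $X^0$ is where $f$ is holomorphic, the question is whether $f$ admits a holomorphic logarithm on $X^0$ that extends continuously to $X$. This is where the chaplet hypothesis does the work: a chaplet is in particular a set of uniform (indeed tangential) approximation, so by the Arakelian criterion $\overline\C\setminus X$ is connected; moreover one checks that each component of $X^0$ is simply connected (the complement of $X$ being connected in $\overline\C$ forces this for the bounded set $X^0$), so on each such component a branch of $\log f$ exists, and these branches assemble to a function $G\in A(X)$ because $X$ itself is connected through its one-dimensional parts (the real and critical axes plus the bridging rectangles) — in the applications we have in mind $X$ is in fact connected and one may simply fix a single branch. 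Thus I first produce $G\in A(X)$ with $e^{G}=f$ on $X$.

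Next I apply Lemma~\ref{walsh tangential infinite} to $G$: with the discrete interpolation set $B$ and multiplicities $m_b$ (which are legitimate since $B\subset X^0$, so all the point-evaluation-of-derivatives functionals up to any order are continuous on $A_{\epsilon'}(X)$), I obtain an entire function $h$ with $|h-G|<\epsilon'$ on $X$ and with $h-G$ vanishing to order $m_b$ at each $b\in B$. Exponentiating, $e^h$ is a zero-free entire function; and since $e^{h}-f=e^{G}(e^{h-G}-1)$ while $e^{h-G}-1$ vanishes to order $m_b$ at each $b$ (the derivatives of $e^{h-G}-1$ up to order $m_b-1$ at $b$ are polynomials in the derivatives of $h-G$ there, all of which vanish — the same elementary remark used in the proofs of Theorem~\ref{generic} and Proposition~\ref{chaplet}), $e^h$ interpolates $f$ to order $m_b$ at each $b\in B$. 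Finally, to get $|e^h-f|<\epsilon$ on $X$ I need only have chosen $\epsilon'$ small relative to $\epsilon$ and to $|f|$: on a compact neighbourhood things are controlled by the elementary estimate $|e^{h}-e^{G}|\le |e^{G}|\,|h-G|\,e^{|h-G|}$, and since $X$ is closed (possibly unbounded) and $\epsilon,\,|f|$ are continuous, a continuous positive $\epsilon'$ doing this exists by the same Tietze-type reasoning used throughout.

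For the symmetry addendum: if $X$, $f$ and $B$ are symmetric with respect to $1/2$, replace the branch $G$ by the symmetrized branch $\tfrac12\bigl(G(s)+G(1-s)\bigr)$ (which still satisfies $e^{G}=f$ because $f(1-s)=f(s)$), replace $\epsilon'$ by its symmetrization $\min(\epsilon'(s),\epsilon'(1-s))$, enlarge $B$ so it is symmetric and set $m_{1-b}=m_b$, and after obtaining $h$ from the lemma replace it by $\tfrac12\bigl(h(s)+h(1-s)\bigr)$; this preserves both the approximation and the interpolation (the interpolation data being itself symmetric) and makes $h$, hence $e^h$, symmetric with respect to $1/2$.

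The step I expect to be the main obstacle is the first one: producing a genuinely continuous logarithm $G\in A(X)$, i.e.\ handling the topology of $X$ and $X^0$ carefully. One must be sure that no component of $X^0$ encircles a zero of $f$ (there are none) and, more delicately, that the separately-defined branches on different components of $X^0$ can be chosen to agree with a single continuous function on the connecting arcs of $X$; absent connectivity one would at worst get a branch on each component of $X$, which is still enough for the lemma. Everything after that — the application of the interpolation lemma, the exponential estimates, and the symmetrization — is routine and parallels the arguments already given for Theorem~\ref{generic} and Proposition~\ref{chaplet}.
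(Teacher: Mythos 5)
Your overall route mirrors the paper's: take a logarithm $G\in A(X)$ with $e^G=f$, approximate-and-interpolate $G$ by an entire $h$ via Lemma~\ref{walsh tangential infinite}, and exponentiate. (For the existence of $G$ the paper simply cites Lemma~2 of \cite{GP}, which gives this directly for sets of uniform approximation; your topological sketch is trying to re-derive that, and the lemma's $X$ need not be connected nor its interior bounded, so citing the reference is cleaner than the argument you outline.)

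However, your symmetrization step has a genuine gap. You propose replacing $G$ by $\tfrac12\bigl(G(s)+G(1-s)\bigr)$ and claim this still satisfies $e^G=f$. It need not. Since $e^{G(s)}=f(s)=f(1-s)=e^{G(1-s)}$, the difference $G(s)-G(1-s)$ is a locally constant integer multiple of $2\pi i$, say $2\pi i\,k_c$ on the component $X_c$. Then $\tfrac12\bigl(G(s)+G(1-s)\bigr)=G(s)-\pi i\,k_c$, so its exponential equals $(-1)^{k_c}f(s)$ on $X_c$; if $k_c$ is odd you get $-f$, not $f$. The paper handles this correctly by exploiting the freedom of branch: on each component $X_c$ of a reflection-pair (with partner $1-X_c$), it replaces $G$ by $G-2\pi i\,k_c$ on $X_c$ only, leaving $G$ unchanged on $1-X_c$. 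This shift by a full multiple of $2\pi i$ preserves $e^G=f$, makes $G(s)=G(1-s)$ exactly, and one observes separately that on self-paired components ($X_c=1-X_c$) the integer $k_c$ is forced to be $0$ because $F(z)=F(z)+2k_c\cdot 2\pi i$ there. Only after $G$ is made genuinely symmetric is the averaging $h\mapsto\tfrac12\bigl(h(s)+h(1-s)\bigr)$ safe, since otherwise $|h(1-s)-G(s)|$ (which appears when you estimate the averaged $h$ against $G$) picks up a term of size $2\pi|k_c|$. So the parity/branch adjustment is the missing idea in your proposal; the rest — the estimate $|e^h-e^G|\le(e^{|h-G|}-1)|e^G|$, the order-$m_b$ interpolation via local biholomorphy of $\exp$, and the final averaging of $h$ — matches the paper.
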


\begin{proof} For the function $f\in A(X)$, a function $F\in A(X)$ is needed such that $e^F =
f.$ In the article \cite{GP}, it is stated in Lemma 2 that this is
the case if $X$ is a set of uniform approximation. Since $X$ is a
chaplet it is indeed a set of uniform approximation. Moreover, it is
easy to see that $F\in A(X)$.

Since the exponential function is uniformly continuous on compacta,
it follows that for any positive continuous function $\eta$, there
is a positive continuous function $\delta$ such that, if $w$ is any
continuous function on $X$ such that $|w|<\delta$, then
$e^{|w|}-1<\eta$. Now set $\eta=\epsilon/|e^F|$. Since $X$ is a
chaplet, it follows from Lemma \ref{walsh tangential infinite} there
is an entire function $h$ such that $|h(z)-F(z)|<\delta(z)$ for
$z\in X$.
$$
|e^h-f|=|e^h-e^F|=|e^{h-F}-1||e^F|\le(e^{|h-F|}-1)|e^F|<\epsilon.
$$
Moreover, we may choose $h$ so that $e^h$ interpolates $f$ at the
points of $B$. Indeed, we may choose $h$ so as to interpolate $F$ to
order $m_b$ at each point $b$.  Thus, $h-F$ has a zero of order
$m_b$ at $b$. Since the exponential function is locally
biholomorphic, the function $e^{h-F}$ assumes the value $1$ with
multiplicity $m_b$ at $b$. Since, $e^F$ is different from zero at
the point $b$,  the product $(e^{h-F}-1)e^F$ also has a zero of
order $m_b$ at $b$. That is, $e^h-f = (e^{h-F}-1)e^F$ has a zero of
order $m_b$ at $b$. So $e^h$ interpolates $f$ to order $m_b$ at the
point $m$.

Suppose, $X$, $f$ and $B$ are symmetric with respect to $1/2$. Since
$f(z) = f(1-z)$ and $e^F = f,$ we have that
$$
    g(z) := \frac{F(z)-F(1-z)}{2\pi i}\in \Z, \,\,\,\,\,\,\,\,\mbox{for all}\,\, z\in X.
$$
The function $g$ is continuous on $X,$ therefore constant on each
component of $X$. The components of $X$ come in pairs. Each member
of a pair is the reflection of the other with respect to $1/2$. Let
$X_c$ be a collection of components, one from each pair, indexed by
a parameter $c$. We may denote the component paired with $X_c$ by
$1-X_c$. The function $g$ is constant on $X_c$ and so there is an
integer $k_c$ such that $F(z)=F(1-z)+k_c2\pi i$, for $z\in X_c$.
Note that if $X_c$ is the same as its paired component, then on
$X_c$, we have $F(z)=F(1-z)+k_c2\pi i=F(z)+k_c2\pi i+k_c2\pi i$ and
so $k_c=0$. Now, we had some freedom in choosing the function $F$.
Let us modify $F$ by leaving $F$ as it is on $1-X_c$ but replacing
it on $X_c$ by $F-k2\pi i.$ With this new definition of $F$, we have
that $g\equiv 0$ on each $X_c.$ Thus, $F$ is symmetric with respect
to $1/2$. Now, we may assume that $h$ is also symmetric with respect
to $1/2$ by replacing $h(z)$ by $(h(z)+h(1-z))/2$. This proves the
lemma.
\end{proof}

In Theorem \ref{main-} we approximated a function $f\in M_Q$ by a
function which satisfies the same functional equation and fails to
satisfy the grand Riemann hypothesis. The following theorem asserts
that we may also approximate by one which {\em does} satisfy the
analogue of the grand Riemann hypothesis.

\begin{theorem}\label{main+}
Let $f$ be a function in $M_Q$ and suppose (as for the Riemann
zeta-function) that no pole of $f$ is a zero of
$\Lambda_Q(f,\cdot)$. Let $\epsilon$ and $\eta$ be strictly positive
continuous functions on $\C$ and $[0,+\infty)$ respectively.  Then,
there exists a function $g\in M_Q$, $g\not=f$, and a closed set
$X\subset\C$, containing the real and critical axes, such that:

(i) on the critical axis, $\Lambda_Q(g,\cdot)$ has the same zeros
with the same multiplicities as $\Lambda_Q(f,\cdot)$;

(ii) $\Lambda_Q(g,\cdot)$ has no zeros off the critical axis;

(iii) $\Lambda_Q(g,\cdot)$ has the same poles with the same
multiplicities as $\Lambda_Q(f,\cdot)$;

(iv) $|g(z)-f(z)|<\epsilon(z)$, for $z\in X$;

(v) area$\{z:z\not\in X, |z|>r\}<\eta(r)$, for $r\in[0,+\infty).$
\end{theorem}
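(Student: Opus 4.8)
The plan is to construct $g$ in the form $g = \nu f$, where $\nu$ is a holomorphic function, symmetric with respect to $1/2$, which is designed so that multiplying by it neither creates nor destroys zeros off the critical axis and preserves the zeros and poles on the critical axis with their multiplicities. Write $\Lambda_Q(f,\cdot) = Q f$. Multiplying $f$ by $\nu$ multiplies $\Lambda_Q(f,\cdot)$ by $\nu$ as well, so the zeros of $\Lambda_Q(g,\cdot)$ off the critical axis are exactly those of $\Lambda_Q(f,\cdot)$ together with the zeros of $\nu$ off the critical axis. Hence the right notion is: we want $\nu$ to have a zero at each zero $\rho$ of $\Lambda_Q(f,\cdot)$ that lies off the critical axis, of multiplicity exactly matching, so that those bad zeros are cancelled, while $\nu$ has \emph{no other zeros off the critical axis}, and $\nu$ interpolates $1$ to high order at every point of the critical axis where $f$ (or $Q$) has a zero or pole, so that properties (i) and (iii) hold; finally $\nu$ must be close to $1$ on a large set $X$ to give (iv)–(v). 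Because $f$ is symmetric with respect to $1/2$, its zeros off the critical axis come in pairs $\{\rho, 1-\rho\}$, and similarly $\{\bar\rho, 1-\bar\rho\}$ if we also want the real symmetry; let $A$ be this (discrete) set of off-axis zeros of $\Lambda_Q(f,\cdot)$, which is symmetric with respect to $1/2$ and the real axis, with prescribed multiplicities $m_a$.

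The construction of $\nu$ splits into two parts. First, to cancel the bad zeros, apply Proposition \ref{chaplet}: choose a chaplet $X$ containing the real and critical axes and disjoint from the discrete set $A$ of off-axis zeros (this is exactly the kind of $X$ built in the proof of Theorem \ref{main-}, with annuli around $A$ and a strip removed, arranged so that (v) holds), let $B$ be a discrete set containing the zeros and poles of $\Lambda_Q(f,\cdot)$ \emph{on} the critical and real axes together with the poles of $f$, with multiplicities $m_b$ chosen symmetric and large enough, and obtain a holomorphic $\nu_1$, symmetric with respect to $1/2$ and the real axis, whose zero set is \emph{precisely} $A$, with $|\nu_1 - 1| < \epsilon_1$ on $X$ and $\nu_1 - 1$ vanishing to order $m_b$ at each $b \in B$. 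Setting $g = \nu_1 f$ then already gives (iii), (iv), (v) (as in Theorem \ref{main-}), and on the critical axis $\Lambda_Q(g,\cdot) = \nu_1 \Lambda_Q(f,\cdot)$ has the same zeros as $\Lambda_Q(f,\cdot)$ since $\nu_1$ is zero-free there and interpolates $1$ to high order at the finitely-or-discretely many critical-axis zeros — so (i) holds. Crucially, off the critical axis, $\Lambda_Q(g,\cdot) = \nu_1 \cdot Q \cdot f$ and the zeros of $\nu_1$ there are exactly $A$, which cancel the off-axis zeros of $\Lambda_Q(f,\cdot)$; since $\nu_1$ has no \emph{other} zeros, $\Lambda_Q(g,\cdot)$ has no zeros off the critical axis, which is (ii). In fact since $A$ consists precisely of the off-axis zeros of $\Lambda_Q(f,\cdot)$, the zero set of $\nu_1$ already matches and no second function is needed; the zero-free Lemma \ref{zero-free} enters only if one wishes to adjust the ``remaining'' factor to be literally zero-free, but here $\nu_1 f / (\text{product over } A)$ is the object one controls.

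The main obstacle is matching multiplicities exactly at the off-axis zeros, not merely having \emph{a} zero there. Proposition \ref{chaplet} produces a $\nu_1$ whose zeros are precisely the points of $A$, but we must ensure the multiplicity of $\nu_1$ at each $a \in A$ equals the multiplicity of $a$ as a zero of $\Lambda_Q(f,\cdot)$, so that the quotient is genuinely zero-free at $a$ and (ii) holds with no residual zero and (i) is unaffected. Re-examining the proof of Proposition \ref{chaplet}: there $\nu$ is built from $g(s)\overline{g(\bar s)}g(1-s)\overline{g(1-\bar s)}$ divided by an exponential, where $g$ is entire with zero set exactly $A$; by taking $g$ to have at each $a \in A$ a zero of the prescribed multiplicity (and using that $A$ and its multiplicities are symmetric), the four factors contribute the correct total multiplicity at each point of $A$, and the exponential is zero-free, so $\nu_1$ has exactly the prescribed multiplicity at each $a$. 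The remaining care is bookkeeping: verifying that the poles of $f$ (assumed not to be zeros of $\Lambda_Q(f,\cdot)$) are handled by the interpolation of $1$ to order $\ge$ pole-order so that $g = \nu_1 f$ has the same principal parts (hence $f - g$ is entire and the maximum principle argument of Theorem \ref{main-} gives (iv) on $X$ globally), and that $A$ can be taken to contain an extra point if it happens to be empty, so that $g \neq f$. $\square$
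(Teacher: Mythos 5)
Your central mechanism is backwards: you set $g=\nu_1 f$ with $\nu_1$ \emph{vanishing} at the off-axis zero set $A$, claiming this cancels the off-axis zeros of $\Lambda_Q(f,\cdot)$. But $\Lambda_Q(g,\cdot)=\nu_1\Lambda_Q(f,\cdot)$, and if both factors vanish at a point $a\in A$ the product vanishes to \emph{higher} order there, not lower. Multiplying by a function that also has zeros at $A$ reinforces those zeros; it does not remove them. To remove them you must multiply by something with \emph{poles} at $A$ (i.e.\ divide), and at the same time stay close to $1$ on $X$ and introduce no new zeros. Your own later phrasing (``the quotient'', ``$\nu_1 f/(\text{product over }A)$'') hints that you sensed the need for a division, but the construction as stated does not deliver it: Proposition \ref{chaplet} produces $\nu_1\approx 1$ on $X$ with zeros exactly at $A$, so $\nu_1/P$ (with $P$ a canonical product over $A$) would be $\approx 1/P$ on $X$, not $\approx 1$, and (iv) would fail.

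The paper instead takes $f_o$ to be an entire function, symmetric about $1/2$, whose zeros are exactly the off-axis zeros of $\Lambda_Q(f,\cdot)$ with matching multiplicities, builds the chaplet $X$ so that it \emph{avoids} $A$ (this is where the deformed annuli are used), and then applies Lemma \ref{zero-free} — not Proposition \ref{chaplet} — to obtain a \emph{zero-free} entire approximant $e^h$ of $f_o$ on $X$, interpolating $f_o$ to high order at the poles of $f$. The factor $e^h/f_o$ is then symmetric about $1/2$, close to $1$ on $X$, has poles precisely at $A$ with exactly the right multiplicities (cancelling the bad zeros), and has no zeros at all (so no new off-axis zeros appear). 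Setting $g=(e^h/f_o)f$ gives (i)--(v); the high-order interpolation at poles makes $e^hf - f_of$ holomorphic so the maximum-principle estimate of Theorem \ref{main-} carries over. You should replace the Proposition-\ref{chaplet}-based construction with this division by $f_o$ and a zero-free tangential approximant, since as written the claim that ``the zeros of $\nu_1$ cancel the off-axis zeros'' is simply false.
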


\begin{proof}
As in the proof of Theorem \ref{main-}, we construct a chaplet $X$,
symmetric with respect to $1/2$ and the real axis, containing the
real and critical axes and satisfying condition (v). Moreover, by
slightly deforming the annuli in the construction of $X$, we may so
construct $X$ that it excludes all of the zeros of
$\Lambda_Q(f,\cdot)$ off the critical axis, while retaining the
condition that $X$ contains all of the poles of $f$ in its interior.

Let $f_o$ be an entire function, symmetric with respect to $1/2$,
whose zeros are precisely the zeros of $\Lambda_Q(f,\cdot)$ off the
critical axis and with the same multiplicities.

Let $B$ be the set of poles of $f$ and their reflections with
respect to the point $1/2$. As in the proof of Theorem \ref{main-},
for each $b\in B$, let $K_b$ be a closed disc centered at $b$ such
that: $K_b$ is contained in $X^0$; the function $f$ has no pole in
$K_b$ other than possibly $b$ ; the discs $K_b$ are disjoint and
form a locally finite family. Let $M_b$ be the minimum of
$\epsilon(z)$, for $z\in K_b$  and set
$$
    N_b = \frac{\max_{z\in\partial K_b}|f(z)|}{\min_{z\in
    K_b}|f_o(z)|}.
$$
Note that $f_o$ has no zeros on $X$ and in particular on $K_b.$ Now,
let $\epsilon_1(z)$ be a strictly positive continuous function on
$X$ such that, for every $z\in X,$
$$
    \epsilon_1(z)<\left\{
\begin{array}{ll}
\epsilon(z)|f_o(z)|/|f(z)|,  &    z\in X\setminus\cup_bK_b;\\
M_b/N_b,                 &    z\in K_b.
\end{array}
    \right.
$$

By Lemma \ref{zero-free}, there is a zero-free entire function $e^h$
symmetric with respect to $1/2$ such that
$$
    |e^h-f_o|< \epsilon_1 \,\, \mbox{on } X
$$
and, if $b$ is any pole of $f$ and the order of the pole is $m_b$,
then $e^h$ interpolates $f_o$ to order $m_b$ . Hence, $e^hf-f_of$ is
in $A(X)$ and so by the maximum principle, as in the proof of
Theorem \ref{main-},
$$
    |e^hf-f_of|<\epsilon|f_o| \,\, \mbox{on } X.
$$
Thus, if we put $g=(e^h/f_o)f$, then
$$
    |g-f|<\epsilon \,\, \mbox{on } X.
$$
As we remarked earlier, because $g$ is the product of a function
$e^h/f_o$ symmetric with respect to $1/2$ and a function $f$ in
$M_Q$, it follows that $g$ is also in $M_Q$. That is, $g$ satisfies
the functional equation. Moreover, $\Lambda_Q(g,\cdot)$ has the same
zeros as $\Lambda_Q(f,\cdot)$ with the same multiplicities on the
critical axis and no other zeros and $\Lambda_Q(g,\cdot)$ has the
same poles as $\Lambda_Q(f,\cdot)$ with the same multiplicities.

We have approximated $f$ by a function $g$ which satisfies the
analogue of the grand Riemann hypothesis. In case $f$ is the Riemann
zeta-function and the Riemann hypothesis holds, the Riemann
zeta-function is, of course, a (perfect) approximation of itself
satisfying the Riemann hypothesis. However, it is easy in our
theorem to make sure that $g\not=f$. For example, the function $h$
is highly non-unique and so not all of the corresponding functions
$g=(e^h/f_o)f$ can be $f$.  Thus, even in case the Riemann
hypothesis holds, Theorem \ref{main+} is non-trivial for the Riemann
zeta-function.
\end{proof}

As a corollary (of the proof of the preceding theorem) we have the
following product decomposition for the Riemann zeta-function.

\begin{corollary}\label{product}
The Riemann zeta-function can be written as a product
$\zeta=b\zeta_+$, where $b$ has all of the non-trivial zeros of
$\zeta$ off the critical axis and $\zeta_+$ is an "ideal" zeta
function in the sense that it has no non-trivial zeros off the
critical axis, has a single simple pole at $z=1$, satisfies the
functional equation and approximates the Riemann zeta-function
extremely well in the sense of the previous theorem.
\end{corollary}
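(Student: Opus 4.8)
The plan is to read the decomposition off directly from the construction carried out in the proof of Theorem \ref{main+}, specialized to $f=\zeta$. Recall that that proof produced an entire function $f_o$, symmetric with respect to $1/2$, whose zeros are exactly the zeros of $\Lambda_Q(\zeta,\cdot)$ off the critical axis (with the correct multiplicities), together with a zero-free entire function $e^h$ (also symmetric with respect to $1/2$, by Lemma \ref{zero-free}) such that $g:=(e^h/f_o)\zeta$ lies in $M_Q$ and enjoys all of properties (i)--(v) of Theorem \ref{main+}: in particular $\Lambda_Q(g,\cdot)$ has no zeros off the critical axis, $g$ has the same poles as $\zeta$ with the same multiplicities, $g$ satisfies the functional equation, and $g$ approximates $\zeta$ extremely well in the sense of (iv)--(v). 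Note that the hypothesis of Theorem \ref{main+} is satisfied here, since $s=1$ is not a zero of $\zeta$.

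First I would set $\zeta_+:=g$ and $b:=\zeta/\zeta_+$. From $\zeta_+=(e^h/f_o)\zeta$ one gets $b=f_o/e^h$, and $\zeta=b\zeta_+$ by construction. Since $e^h$ is entire and zero-free, $b$ is entire and its zeros are precisely those of $f_o$, i.e.\ exactly the non-trivial zeros of $\zeta$ lying off the critical axis, with the right multiplicities. This gives the first assertion of the corollary and already shows $b$ has no poles.

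Next I would verify that $\zeta_+$ is an ``ideal'' zeta-function in the sense stated. Properties (i), (ii), (iv) and (v) of Theorem \ref{main+} give, respectively, the agreement of the zeros on the critical axis, the absence of zeros off the critical axis, the functional equation (since $\zeta_+=(e^h/f_o)\zeta$ is the product of a function symmetric with respect to $1/2$ and a member of $M_Q$), and the extremely good approximation of $\zeta$. The one point needing a word is the single simple pole at $s=1$: property (iii) records that $\zeta_+$ has the same poles as $\zeta$ with the same multiplicities, but one should check that the correcting factor $e^h/f_o$ does not introduce a spurious zero or pole there. This is immediate, since $s=1$ is neither a zero of $\zeta$ off the critical axis (so $f_o(1)\neq 0$) nor a zero of $e^h$, whence $e^h/f_o$ is holomorphic and non-vanishing in a neighbourhood of $s=1$; thus $\zeta_+$ inherits from $\zeta$ exactly one pole, simple, at $s=1$.

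I do not expect a genuine obstacle: the entire content is extracted from the proof of Theorem \ref{main+}. The only step requiring care is precisely the last one above --- confirming that the correcting factor $e^h/f_o$ is holomorphic and zero-free near the pole $s=1$ --- so that $\zeta_+$ acquires no extraneous zero or pole at $s=1$ and $b=f_o/e^h$ stays entire; as noted, this is clear because $1$ is neither a zero nor a pole of $\zeta$ off the critical axis.
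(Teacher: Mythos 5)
Your proposal is correct and takes essentially the same approach as the paper: both set $\zeta_+=g=(e^h/f_o)\zeta$ from the proof of Theorem \ref{main+} and $b=f_o e^{-h}$ (equivalently $\zeta/\zeta_+$), then read off the properties of $b$ and $\zeta_+$. The paper's proof is a one-liner; you have simply spelled out the verifications, including the (correct, and worth noting) observation that $e^h/f_o$ is holomorphic and non-vanishing near $s=1$ so that $\zeta_+$ keeps exactly the simple pole of $\zeta$ and $b$ stays entire.
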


\begin{proof}
In the proof of the previous theorem, we have $g=(e^h/f_o)f$. Take
$f=\zeta$ and set $b=f_oe^{-h}$ and $\zeta_+=g$.
\end{proof}

The Riemann hypothesis would of course follow, if one could show
that $b=1$ in this decompostion. But the product decomposition in
Corollary \ref{product} is not unique, since the approximating
function $g$ in Theorem \ref{main+} is not unique. An improved
version of Theorem \ref{main+} is therefore desirable, in which the
class of approximating functions $g$ is reduced to functions
satisfying even more properties of the Riemann zeta function.  In
Section 4 we shall discuss the possibility of representing the
approximating functions in Theorem \ref{main-} and Theorem
\ref{main+} by Dirichlet series.

{\bf Philosophical remarks.} The above approximations are so strong
that the approximator "cannot be distinguished" from the
approximatee. Suppose a mischievous  angel (a devil?) were to hand
us functions $\zeta_+$ and $\zeta_-$, which approximate the Riemann
zeta-function as in our theorems and satisfy the corresponding
functional equation, where $\zeta_+$ satisfies the analogue of the
Riemann hypothesis and $\zeta_-$ does not . If the angel claimed
that these functions were in fact the Riemann zeta-function, there
is no way we could prove it wrong. That is, we could not distinguish
these functions from each other or from the Riemann zeta-function.
Indeed, we can choose the speed of approximation $\epsilon$ so small
that on $X$ these functions differ by each other by less than the
diameter of an electron and so no (present or future) scientific
instrument could distinguish their values on $X$. While it is true
that on $\C\setminus X$, these functions may differ greatly, this
set $\C\setminus X$ is so small, that if the fastest conceivable
computer (or finite team of computers) where to pick points of $\C$
successively at random, it is likely that the sun would grow cold
before the computer would fall upon a point of $\C\setminus X$. We
thus have functions $\zeta_+$ and $\zeta_-$ satisfying the
functional equation, "indistinguishable" from the Riemann
zeta-function and from each other, such that $\zeta_+$ has no
non-trivial zeros off the critical axis while $\zeta_-$ does.


\section{Dirichlet series}

Let $\zeta_+$ and $\zeta_-$ be functions as in the above theorems,
which approximate the Riemann zeta function, are practically
indistinguishable and respectively satisfy and do not satisfy the
analogue of the Riemann hypothesis.

We recall that both functions $\zeta_+$ and $\zeta_-$ approximate
the Riemann zeta function $\zeta$ exceedingly well; the function
$\zeta_+$ has no non-trivial zeros off the critical axis, whereas
the function $\zeta_-$ can be chosen to have a multitude of
non-trivial zeros off the critical axis. We should like to say
something concerning the representations of these functions by
Dirichlet series. To this end we shall employ a result of
Fr\'ed\'eric Bayart \cite{B} establishing  the existence of a
universal Dirichlet series.

{\bf Definition.} A compact subset $K$ of $\C$ is said to be {\it
admissible} if ${\C}\setminus K$ is connected, and if $K$ can be
written $K=K_1\cup\cdots\cup K_d,$ each $K_i$ being contained in a
strip $S_i=\{z:a_i\le\Re(z)\le b_i\},$ with $b_i-a_i<1/2,$ the
strips $S_i$ being disjoint.

\begin{theorem}\cite[Remark 5]{B}\label{bayart} There exists a Dirichlet series
$S(z)=\sum_{n\ge 1}a_nn^{-z}$  absolutely convergent in the open
right half-plane $\{ z:\Re(z)>1\}$, which is universal in the
following sense. For each admissible compact set $K$ in the closed
left half-plane $\{ z:\Re(z)\le 1\}$ and each function $g$
continuous on $K$ and holomorphic on the interior of $K$, there is a
sequence of partial sums of $S$ which converges to $g$ uniformly.
The set of such Dirichlet series is dense in the space of Dirichlet
series absolutely convergent in $\{z:\Re(z)>1\}.$
\end{theorem}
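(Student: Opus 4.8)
The plan is to prove Theorem \ref{bayart} by a Baire category argument carried out in the Fr\'echet space of Dirichlet series absolutely convergent in $\{z:\Re z>1\}$; the technical heart will be an approximation lemma showing that a single "tail block" of such a series can be made to approximate an arbitrary target on an admissible compact set while barely perturbing the series in the ambient topology.

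First I would fix the ambient space. Let $\mathcal D$ be the set of formal series $S(z)=\sum_{n\ge1}a_nn^{-z}$ with $\sum_n|a_n|n^{-\sigma}<\infty$ for every $\sigma>1$; these are exactly the series absolutely convergent on $\{\Re z>1\}$. Equip $\mathcal D$ with the increasing family of norms $\|S\|_k=\sum_n|a_n|n^{-(1+1/k)}$, $k=1,2,\dots$, so that $\mathcal D$ is a Fr\'echet space, hence a Baire space, and convergence in $\mathcal D$ forces local uniform convergence of the sums on $\{\Re z>1\}$. Next I would set up a countable universality family $\{(K_j,g_j)\}_j$ in which each $K_j$ is an admissible compact subset of $\{\Re z\le1\}$ taken from a fixed countable basis of admissible compacta (finite unions of closed rectangles with rational data lying in disjoint strips of width $<1/2$) and each $g_j$ is a polynomial with Gaussian-rational coefficients. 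Since the complement of an admissible set is connected, Mergelyan's theorem shows every $h\in A(K)$ is a uniform limit on $K$ of polynomials, and every admissible $K$ sits inside a slightly larger basis element; hence a series whose partial sums can approximate each $g_j$ on $K_j$ is automatically universal in the sense of the theorem. For $j,m\in\N$ put
$$
U_{j,m}=\Big\{S\in\mathcal D:\ \exists N\ \text{ with }\ \sup_{z\in K_j}\Big|\sum_{n\le N}a_nn^{-z}-g_j(z)\Big|<\tfrac1m\Big\},
$$
which is open, being a countable union over $N$ of conditions depending continuously on the first $N$ coefficients.

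The crux is the \emph{approximation lemma}: given an admissible compact $K\subset\{\Re z\le1\}$, $g\in A(K)$, $\epsilon>0$, and integers $N_0,k$, there is a finite block $Q(z)=\sum_{N_0\le n\le N_1}a_nn^{-z}$ with $\sup_K|Q-g|<\epsilon$ and $\|Q\|_k<\epsilon$. Granting this, each $U_{j,m}$ is dense: starting from any $S\in\mathcal D$, truncate $S$ to a long initial segment, apply the lemma on $K_j$ with target $g_j$ minus that truncation and with $N_0$ chosen beyond both the truncation and the relevant tail of $S$, and append the resulting block; the sum moves by less than any prescribed amount in $\mathcal D$, but now a suitable partial sum matches $g_j$ on $K_j$ to within $1/m$. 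Consequently $\bigcap_{j,m}U_{j,m}$ is a dense $G_\delta$ in $\mathcal D$, every member of which is universal, and repeating the perturbation argument from an arbitrary starting series shows that this residual set is dense in $\mathcal D$, which gives the final assertion of the theorem.

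I expect the approximation lemma to be the genuine obstacle, and it is here that admissibility is used in full. By Mergelyan one first reduces $g$ to a polynomial in $z$; writing $n^{-z}=e^{-z\log n}$, one must then approximate polynomials, uniformly on a compact subset of a vertical strip of width $<1/2$, by exponential sums with frequencies drawn from $\{\log n:n\ge N_0\}$. The gaps $\log(n{+}1)-\log n\sim 1/n$ make this set of frequencies dense enough, and the hypothesis that each component of $K$ lie in a strip of width \emph{strictly} less than $1/2$ is exactly what places the problem within reach of a Beurling--Malliavin/M\"untz-type completeness theorem for such exponential systems, while the disjointness of the strips lets one treat the components $K_1,\dots,K_d$ essentially independently and then paste. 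The remaining delicate point is to carry out this exponential-sum approximation with explicit control of $N_1$ and of the coefficient sums $\sum|a_n|n^{-(1+1/k)}$, so that taking $N_0$ large forces $\|Q\|_k<\epsilon$; this quantitative bookkeeping is the technical core, and is where I would follow Bayart's argument most closely.
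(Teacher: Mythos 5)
The paper does not prove this statement: it is quoted verbatim from Bayart \cite[Remark 5]{B}, and the only thing the authors add is the observation that a translation $s\mapsto z-1$ moves Bayart's half-plane $\Re s>0$ to the $\Re z>1$ normalization used here. So there is no ``paper's proof'' to compare against; you are being asked to reconstruct Bayart's argument.

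Your high-level architecture is the right one and matches the standard machinery behind such universality theorems, including Bayart's: realize the ambient space of absolutely convergent Dirichlet series as a Fr\'echet (hence Baire) space, exhaust the universality requirement by a countable family of open conditions $U_{j,m}$ indexed by a countable basis of admissible compacta and rational polynomials, and prove density of each $U_{j,m}$ by splicing a ``tail block'' that approximates on $K_j$ while being negligible in the Fr\'echet seminorms. One small point of care you gloss over: you must actually verify that your proposed rational rectangle basis is cofinal among admissible compacta (every admissible $K$ sits inside a basis element that is itself admissible and still in $H_{\le1}$); this is true but requires checking that one can thicken the components inside their strips without destroying connectedness of the complement or crossing $\Re z=1$, particularly because $H_{\le1}$ is closed and $K$ may touch the boundary line.

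The genuine gap is the one you name yourself: the ``approximation lemma'' is the entire theorem, and you do not prove it. Everything else in your write-up is routine Baire bookkeeping; the mathematical content is the claim that, for $N_0$ large, one can find coefficients $(a_n)_{N_0\le n\le N_1}$ so that $\sum a_n n^{-z}$ is uniformly close to a prescribed polynomial on an admissible $K$ \emph{and} simultaneously has arbitrarily small Fr\'echet seminorm. You wave at Mergelyan, at the $1/n$ spacing of the frequencies $\log n$, at the width-$<1/2$ hypothesis, and at Beurling--Malliavin/M\"untz-type completeness, and then write ``this is where I would follow Bayart's argument most closely.'' That is an honest acknowledgment, but it means the proposal is a roadmap rather than a proof: the one step that cannot be taken on faith is precisely the step left unproven. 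Moreover, the appeal to Beurling--Malliavin is not obviously the right tool, since those completeness theorems do not by themselves give the quantitative control on $N_1$ and on $\sum|a_n|n^{-(1+1/k)}$ that you correctly identify as necessary to make $\|Q\|_k$ small; Bayart's actual argument has to produce an \emph{effective} approximation with explicit coefficient bounds, and reproducing that estimate is the whole game. Until that lemma is supplied with a complete proof, the proposal does not establish the theorem.
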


Bayart states his theorem for the right half-plane $\Re s>0$, but
the substitution $s=z-1$ yields the above version. From this, we can
obtain the following. For a real number $\sigma$, denote by
$H_{>\sigma}$ the open right-half plane $\{z:\Re(z)>\sigma\}$ and by
$H_{\le\sigma}$ the closed left-half plane $\{z:\Re(z)\le\sigma\}.$

\begin{corollary}\label{corollary}
There exists a Dirichlet series $S(z)=\sum_{n\ge 1}a_nn^{-z}$
absolutely convergent in the open right half-plane $H_{>1}$, which
has the following properties. For each pair of functions $\zeta_+$
and $\zeta_-$ as in the above theorems, there is a sequence of
partial sums of $S$ which converges to $\zeta$ on $H_{\le 1}$,
uniformly on admissible compact subsets of $H_{\le
1}\setminus\{1\}$; there is a sequence of partial sums of $S$ which
converges to $\zeta_+$ on $H_{\le 1}$, uniformly on admissible
compact subsets of $H_{\le 1}\setminus\{1\}$; and there is a
sequence of partial sums of $S$ which converges to $\zeta_-$ on
$H_{\le 1}$, uniformly on admissible compact subsets of $H_{\le
1}\setminus\{1\}.$ Moreover, for each $\delta>0,$ there exists such
a Dirichlet series with the further property that for $\Re(z)\ge
1+\delta,$
\begin{equation}\label{universal}
    \left|\sum_{n=1}^\infty\frac{a_n}{n^z}-\sum_{n=1}^\infty\frac{1}{n^z}\right| < \delta.
\end{equation}
\end{corollary}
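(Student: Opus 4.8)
I would deduce the corollary from Bayart's theorem on universal Dirichlet series (Theorem \ref{bayart}), taking for $S$ a single universal Dirichlet series. The feature that makes ``one $S$ for every pair $\zeta_+,\zeta_-$'' possible is that a Bayart--universal series is universal with respect to \emph{every} admissible compact $K$ and \emph{every} target $g\in A(K)$ simultaneously, so nothing about $S$ needs to depend on which perturbations we are handed.

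First I would pin down $S$. By the density clause of Theorem \ref{bayart}, the universal Dirichlet series form a dense subset of the space of Dirichlet series absolutely convergent in $H_{>1}$. Since $\sum_{n\ge1}n^{-z}$, the series of $\zeta$, lies in that space and $\bigl\{\,T=\sum b_nn^{-z}:\ \sup_{\Re z\ge1+\delta}\bigl|T(z)-\sum_{n\ge1}n^{-z}\bigr|<\delta\,\bigr\}$ is a neighbourhood of it, I can choose $S=\sum_{n\ge1}a_nn^{-z}$ universal and lying inside this neighbourhood; then the estimate (\ref{universal}) holds for $\Re z\ge 1+\delta$. (Equivalently one arranges $\sum_{n\ge1}|a_n-1|\,n^{-(1+\delta)}<\delta$ and bounds the difference of the two series termwise.)

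Next, the three functions to be approximated all lie in $A(K)$ for every admissible compact $K\subset H_{\le1}\setminus\{1\}$: $\zeta$ is holomorphic on $\C\setminus\{1\}$, and by the construction of $\zeta_-$ and $\zeta_+$ in Theorems \ref{main-} and \ref{main+} (cf. Corollary \ref{product}) each of $\zeta_+,\zeta_-$ has $z=1$ as its only pole, hence is also holomorphic on $\C\setminus\{1\}$; in particular each is holomorphic on a neighbourhood of any such $K$. So, fixing one target $\phi\in\{\zeta,\zeta_+,\zeta_-\}$, Theorem \ref{bayart} applied to $K$ and $\phi|_K$ gives, for each admissible $K\subset H_{\le1}\setminus\{1\}$, a subsequence of partial sums of $S$ converging to $\phi$ uniformly on $K$. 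To upgrade this to a single sequence of partial sums that works for all admissible compacta at once, I would exhaust $H_{\le1}\setminus\{1\}$ by a countable cofinal family of admissible compacta and run a diagonal extraction, at the $j$-th stage choosing a partial sum $S_{N_j}$, with $N_j$ strictly increasing, such that $\|S_{N_j}-\phi\|<1/j$ on the union of the first $j$ members of the family. Carrying this out separately for $\phi=\zeta$, $\phi=\zeta_+$, $\phi=\zeta_-$ produces the three asserted sequences (on $H_{\le1}$ itself the convergence is to $\phi$ away from the pole $z=1$).

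The delicate step is this diagonal extraction, and I expect it to be the main obstacle. Theorem \ref{bayart} furnishes approximation on an admissible compact, whereas the $j$-th stage needs approximation on the \emph{union} of the first $j$ members of the family; and a finite union of admissible compacta is admissible only when the constituent vertical strips stay pairwise disjoint, which is not automatic. So the cofinal family cannot be chosen naively: it must be organized — for instance drawn from a fixed countable reservoir of thin truncated vertical strips arranged so that the unions actually formed during the induction have pairwise disjoint defining strips — or else one settles for the (for the ``philosophical'' application entirely sufficient) weaker conclusion that on each admissible compact \emph{separately} a subsequence of partial sums of $S$ converges to each of $\zeta,\zeta_+,\zeta_-$. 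Granting this organizational point, the rest is routine: the selection of $S$ in the second paragraph is immediate from density, and the statement that $\zeta_+$ and $\zeta_-$ have no pole other than $z=1$ is just bookkeeping from Theorems \ref{main-} and \ref{main+}.
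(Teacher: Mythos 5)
Your proof takes essentially the same route as the paper's: a Bayart universal series $S$, the density clause of Theorem \ref{bayart} to secure the estimate (\ref{universal}), observation that $\zeta,\zeta_+,\zeta_-$ lie in $A(K)$ for each admissible $K$, and a diagonal extraction over a countable cofinal family of admissible compacta exhausting $H_{\le1}\setminus\{1\}$. Two small differences are worth noting. First, the paper does not form unions $K_1\cup\cdots\cup K_j$ at each stage; it simply borrows, with citation, Bayart's exhausting family $\{K_k\}$ and approximates on $K_k$ directly, so the union-admissibility concern you raise does not arise in that form (the burden is shifted entirely to the cofinality of $\{K_k\}$, which is asserted ``as in \cite{B}''). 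Your instinct that the cofinal family ``cannot be chosen naively'' is sound; the paper settles this by citation rather than construction, so the worry is real but delegated. Second, the paper adds a small device you omit: it approximates $\zeta_k$ on $K_k\cup\{1\}$ with $\zeta_k(1)=k$, so the diagonal sequence diverges at $z=1$, giving content to ``converges to $\zeta$ on $H_{\le1}$'' at the pole itself; your parenthetical ``(away from the pole $z=1$)'' quietly weakens this part of the statement. Otherwise the two arguments coincide in substance.
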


\begin{proof}
As in \cite{B}, let $K_1, K_2, \cdots$ be a sequence of admissible
compacta in $H_{\le 1}\setminus\{1\}$ such that each admissible
compact subset of $H_{\le 1}\setminus\{1\}$ is contained in some
$K_k$. Set $\zeta_k=\zeta$ on $K_k$ and $\zeta_k(1)=k$. Let $S$ be a
universal Dirichlet series as in Theorem \ref{bayart}. By Theorem
\ref{bayart}, for each $k$,  there is a sequences of partial sums of
$S$:
$$
    S^{k1},S^{k2},\cdots\rightarrow\zeta_k, \,\,\,\,
    \mbox{uniformly on}\,\,K_k\cup\{1\}.
$$
A diagonal sequence of partial sums of $S$  converges  to $\zeta$ on
$H_{\le 1}$ and  uniformly on admissible compact subsets of $H_{\le
1}\setminus\{1\}.$ We do the same for $\zeta_+$ and $\zeta_-.$

By Theorem \ref{bayart}, the set of  Dirichlet series such as $S$ is
dense in the space of Dirichlet series absolutely convergent in
$H_{>1}.$ In particular, there is such a Dirichlet series $S$ which
satisfies (\ref{universal}).
\end{proof}

A special case of the preceding corollary is the following.

\begin{corollary} Let $E$ be a discrete subset of the open left half-plane
$H_{<1/2}$. For each $\delta>0,$ there exists a Dirichlet series
$S(z)=\sum_{n\ge 1}a_nn^{-z}$ absolutely convergent in the open
right half-plane $H_{>1}$ such that on the half-plane $H_{\ge
1+\delta},$
$$
    \left|\sum_{n=1}^\infty\frac{a_n}{n^z}-\sum_{n=1}^\infty\frac{1}{n^z}\right| < \delta.
$$
Moreover, a sequence of partial sums of $S$ converges to $\zeta$
uniformly on compact subsets of the critical strip
$\{z:1/2\le\Re(z)<1\}$ and to zero on $E$.
\end{corollary}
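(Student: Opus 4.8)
The plan is to derive this as a direct consequence of Corollary \ref{corollary}. Recall that in Theorem \ref{main+} we already constructed an "ideal" approximator $\zeta_+$ of the Riemann zeta-function: a function in $M_Q$ with no non-trivial zeros off the critical axis, with a single simple pole at $z=1$, satisfying the functional equation, and approximating $\zeta$ to within any prescribed strictly positive continuous $\epsilon$ on a set $X$ containing the critical strip. So it suffices to manufacture one such $\zeta_+$ that in addition vanishes on the given discrete set $E\subset H_{<1/2}$, and then feed it into Corollary \ref{corollary}.

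First I would note that, since $E\subset H_{<1/2}$ is discrete and disjoint from the critical axis, I can enlarge $E$ to a set $A$ that is discrete, symmetric with respect to $1/2$ and the real axis, and disjoint from the critical and real axes (take $A=E\cup\overline E\cup(1-E)\cup(1-\overline E)$, discarding finitely many points if accidental coincidences arise, or perturbing $E$ slightly so as to avoid them). Then I apply the construction in the proof of Theorem \ref{main+} with $f=\zeta$, choosing the chaplet $X$ (by deforming the annuli as in that proof) so that $X$ contains the critical strip $\{z:0\le\Re z\le 1\}$ minus small neighbourhoods of the points of $A$ — that is, so that $X$ excludes $A$ while still excluding the non-trivial zeros of $\zeta$ off the critical axis and still containing the pole at $z=1$ in its interior. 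The function $f_o$ in that proof is an entire function whose zeros are precisely the non-trivial zeros of $\zeta$ off the critical axis; I would replace $f_o$ by $f_o\cdot g_A$, where $g_A$ is an entire function, symmetric with respect to $1/2$, whose zero set is exactly $A$ (such a $g_A$ exists by the Weierstrass product, and symmetry is arranged by taking the product over $A$ which is itself symmetric). Since $A$ is disjoint from $X$, $f_o g_A$ is still zero-free on $X$, so Lemma \ref{zero-free} still applies and produces a zero-free entire $e^h$, symmetric with respect to $1/2$, interpolating $f_o g_A$ to the appropriate orders at the poles. Setting $g=(e^h/(f_o g_A))\,\zeta$ then yields a $\zeta_+$ in $M_Q$ which additionally vanishes at every point of $A$, hence at every point of $E$, and which still approximates $\zeta$ within $\epsilon$ on $X\supset$ (critical strip minus small discs about $A$).

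Having produced such a $\zeta_+$ vanishing on $E$, the corollary is immediate from Corollary \ref{corollary}: that corollary gives a Dirichlet series $S$, absolutely convergent in $H_{>1}$ and satisfying $\bigl|\sum a_n n^{-z}-\sum n^{-z}\bigr|<\delta$ on $H_{\ge 1+\delta}$, a sequence of whose partial sums converges to $\zeta_+$ on $H_{\le 1}$ uniformly on admissible compact subsets of $H_{\le 1}\setminus\{1\}$. Since $\zeta_+$ agrees with $\zeta$ to within any prescribed tolerance on the critical strip and equals $0$ on $E$, I would pass from "converges to $\zeta_+$" to "converges to $\zeta$ uniformly on compacta of the critical strip and to zero on $E$" by the usual diagonal extraction: one can arrange $\zeta_+$ itself to be uniformly within $1/k$ of $\zeta$ on the disc $|z|\le k$ intersected with the critical strip, so the partial sums of $S$ approximating $\zeta_+$ also approximate $\zeta$ there, and they converge to $\zeta_+(e)=0$ at each $e\in E$.

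The main obstacle is the geometric bookkeeping in the first step: one must simultaneously (a) keep the chaplet $X$ containing the critical strip so that the resulting Dirichlet-series partial sums converge to $\zeta$ on compacta there, (b) exclude from $X$ both the off-critical zeros of $\zeta$ and the new interpolation set $A$, (c) retain the pole of $\zeta$ at $z=1$ in the interior of $X$, and (d) preserve the symmetry of $X$ with respect to $1/2$ and the real axis so that Lemma \ref{zero-free} and the product construction go through. All of this is handled exactly as in the proof of Theorem \ref{main+} — deforming the annuli $\mathcal A_j$ and choosing the discs $Q_b$ small — the only new ingredient being that the excised set now also includes small discs about the points of $A$, which is harmless since $A$ is discrete and disjoint from the critical and real axes. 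Everything else is routine, so this is a short corollary.
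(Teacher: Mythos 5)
Your route is genuinely different from the paper's, but it has a fatal internal contradiction at the very first step: the function $\zeta_+$ produced by Theorem \ref{main+} is, by construction, one for which $\Lambda_Q(\zeta_+,\cdot)$ has \emph{no} zeros off the critical axis, yet $E\subset H_{<1/2}$ lies entirely off the critical axis, so no such $\zeta_+$ can also vanish on $E$. Your formula $g=(e^h/(f_o g_A))\zeta$ actually makes matters worse rather than better: $g_A$ vanishes on $A$ and sits in the denominator, so this $g$ acquires \emph{poles} at the points of $A$, not zeros. If you multiply by $g_A$ instead, you are building a function of the $\zeta_-$ type from Theorem \ref{main-}, whose off-critical zeros at $A$ expel it from the class addressed by Theorem \ref{main+} entirely.

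Even after switching to Theorem \ref{main-}, the construction does not go through as you describe. That theorem requires $A$ to be disjoint from the chaplet $X$ and symmetric about $1/2$ and the real axis, with $X$ containing the real and critical axes; but the symmetrization $A\supset(1-E)$ will generically meet the open strip $\{1/2<\Re z<1\}$ (this happens whenever some $e\in E$ has $0<\Re e<1/2$), and $E$ itself may contain points on the real axis. So one cannot in general keep the full critical strip inside $X$ while excluding $A$ from $X$, and the diagonal argument at the end of your proof has nothing to stand on. The paper sidesteps all of this by never asking for a single meromorphic approximant at all: it sets up a \emph{piecewise} target $g$, equal to $\zeta$ on a compact vertical substrip $\{1/2\le\Re(z)\le n/(n+1)\}$ and equal to $0$ on $\{z_1,\dots,z_n\}$, places it on an admissible (disconnected) compactum $K_n$, and applies Theorem \ref{bayart} directly together with a diagonal extraction over $n$. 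The essential point is that a universal Dirichlet series may approximate \emph{different} holomorphic data on different components of a disconnected admissible compactum, a freedom that no single element of $M_Q$ can imitate.
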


\begin{proof}
Arrange the set $E$ in a sequence $\{z_n\}$ and set
$$
    K_n = \{z:1/2\le\Re(z)\le n/(n+1)\}\cup\{z_1,z_2,\cdots,z_n\}.
$$
We define $g$ on $K_n$ by setting $g=\zeta$ on $\{z:1/2\le\Re(z)\le
n/(n+1)\}$ and $g=0$ on $\{z_1,z_2,\cdots,z_n\}.$ Then, $K_n$ is
admissible and $g$ is continuous on $K_n$ and holomorphic on the
interior.  By Theorem \ref{bayart}, there is a Dirichlet series $S$
absolutely convergent in the right half-plane $H_{>1}$ and there is
a sequence $n_1<n_2<\cdots$ such that, for each $k$,
$|S^{n_k}-g|<1/k$ on $K_k$.  By Theorem \ref{bayart}, the set of
such Dirichlet series is dense in the space of Dirichlet series
absolutely convergent in $H_{>1}.$ In particular, there is such a
Dirichlet series $S$ which satisfies (\ref{universal}).
\end{proof}

Similar results can be proved for $L$-functions.


\section{Examples of $L$-functions}

Let us describe some examples of $L$-functions in number theory
verifying the conditions we ask for.

First examples of $L$-functions are the Dirichlet $L$-functions
associated to a Dirichlet character $\chi \pmod{d}$ (see for example
\cite{Da}), that is, a function on the integers which is not
identically zero and verifies $\chi(n\cdot m)= \chi(n)\chi(m)$,
$\chi(n+d)=\chi(n)$ and $\chi(n)=0$ if $(n,d)>1$. They are defined
in the half plane $\Re(s)>1$ by the series
$$L(s,\chi):=\sum_{n=1}^{\infty} \frac{\chi(n)}{n^s}.$$
If $\chi$ is a principal character (that is, $\chi(n)=1$ or $0$ for
all $n$),  we obtain "essentially" the usual Riemann zeta-function.
In all other cases, the function $\chi$ can be extended
holomorphicly to the whole complex plane. If the character $\chi$ is
primitive and we define
\begin{equation}
Q(s):=\left(\frac d{\pi}\right)^{(s+\delta)/2}
\Gamma\left(\frac{s+\delta}2\right) , \ \mbox{ where }
\delta:=\frac{1-\chi(-1)}2,
\end{equation}
then the function $\Lambda_Q(\chi,s):=Q(s)L(\chi,s)$ verifies a
functional equation \cite{L} of the form
$$
\Lambda_Q(\chi,s):=W(\chi){\Lambda_Q}(\overline\chi,1-s),
$$
where $W(\chi)$ is an algebraic complex number with absolute value
$1$, called the root number. This equation is equivalent to the
functional equation
$$
\Lambda_Q(\chi,s):=W(\chi) \overline{\Lambda_Q}(\chi,1-s),
$$
where, for a complex function $\Lambda$, $\overline{\Lambda}(s)$ is
the complex conjugate of $\Lambda$, defined as
$\overline{\Lambda}(s):=\overline{\Lambda(\overline s)}$. To see
this equivalence, it is sufficient to notice that it holds for
positive real $s$.

So, if  $\chi$ is primitive and is  real valued (that is
$\chi(n)=\pm 1$ or $0$), and $W(\chi)=1$, then  the function
$\Lambda_Q(\chi,s):=Q(s)L(\chi,s)$, with $Q$ given by (6), satisfies
the functional equation (1). Note that Gauss proved something
equivalent to the hypothesis $W(\chi) = 1$ always being satisfied
for such real-valued characters.

More general examples are furnished by  Artin $L$-functions
associated to group representations $\rho: \mbox{Gal}(N/K)\to
\mbox{GL}_n(\C)$, where $N/K$ is a Galois extension of fields, each
of them finite extensions of $\Q$. In this case the conditions we
need are that $\rho$ be real valued (meaning its image is in
$\mbox{GL}_n(\R)$), and that the root number $W(\rho)=1$. The fact
that $\rho$ is real valued implies directly that the root number
$W(\rho)=\pm 1$, so the condition is verified automatically if
$L(\rho,1/2)\ne 0$. On the other hand, by the Fr\"{o}hlich-Queyrut
theorem \cite{FQ}, the so-called real representations (also called
real orthogonal) have root number always equal to 1 .

Easier examples of $L$-functions that {\em do} satisfy the Grand
Riemann Hypothesis are the $L$-functions associated to a (smooth
projective) curve over a finite field. In this case the functions
are always of the form $L(s)= P(q^{-s})$, where $q$ is a power of a
prime number and $P(t)$ is a polynomial with integer coefficients of
even degree $2g$ verifying the conditions, firstly, that $P(\frac
1{qt})=t^{2g}q^{-g} P(t)$ and secondly, that all (complex) roots
$\alpha$ have absolute value $| \alpha |=1/\sqrt{q}$. The first
condition implies the functional equation, taking $Q(s):=q^{-gs}$,
and the second that the zeros of $\Lambda_Q(s):=q^{-gs}P(q^{-s})$
are of the form $\log(\alpha)/\log (q)$, and so have real part equal
to $1/2$.

Some examples of other $L$-functions that are known to have a
meromorphic extension and a functional equation are the
$L$-functions associated to cuspidal modular forms $f$ (and, more
generally, to cuspidal automorphic form $\pi$). In these cases we
obtain again a functional equation of the form
$$
\Lambda_Q(f,s):=Q(s)L(f,s)=\epsilon \overline{\Lambda_Q}(f,1-s),
$$
where $Q(s)$ is a function of the form $K^s\prod_{j=1}^n
\Gamma(\lambda_j s + \mu _j)$,  $n$ is a natural number, $K$ and the
$\lambda_j$'s are positive real numbers and the $\mu_j$'s are
complex numbers with non-negative real part. So the conditions we
need are verified if $f$ is a so-called real modular form (or with
real coefficients) and $\epsilon=1$. This last condition is again
automatic if $L(f,1/2)\ne 0$, since $\epsilon$ must be $\pm 1$ if
the form is real.

This last case includes some $L$-functions of more geometric origin:
the $L$-functions associated to elliptic curves defined over $\Q$.
After the work of Andrew Wiles and others (see \cite{W} and
\cite{BCDT}) it is known that the $L$-function of such a curve is
equal to the $L$-function of a (real) modular form (of weight 2).
From the work of Victor A. Kolyvagin \cite{Ko} it is also known that
the condition $L(f,1/2)\ne 0$ is equivalent to the condition the
curve have only finitely many points over $\Q$. On the other hand,
the condition that the root number be equal to 1 should be
equivalent, according to the famous Birch and Swinnerton-Dyer
conjecture (see for example the Bourbaki talk \cite{T}), to the rank
of the group of rational points of the curve being even. Note that
we are considering a modified version of the usual $L$-function,
which, in this case, has a functional equation relating $s$ with
$2-s$.

Note that all the cases presented are Dirichlet series having an
Euler product. Recall that a complex function $f$ has an Euler
product if for $\Re(s)>1$ it can be expressed as a Dirichlet series
and an infinite product varying on the primes $p$
$$f(s)=\sum_{n=1}^{\infty}\lambda(n)n^{-s}=\prod_{p} f_p(s)$$
such that
$$f_p(s)=\prod_{i=1}^{d_p} (1-\alpha_i(p)p^{-s})^{-1}$$
where $\alpha_i(p)$ are complex numbers such that $|\alpha_i(p)|<p$,
$\lambda(n)$ are also complex numbers, and $\lambda(1)=1$ and
$d_p\ge 1$ is a natural number, which in the classical cases is
independent of $p$ (see for example \cite{IK}, chapter 5). It would
be very interesting to show that we can approximate by functions
having some of these properties.

\begin{remark} The results in this
paper can be modified to cover some of the other cases of functional
equations for $L$-functions as explained in the examples above. For
example, one can consider $L$-functions with functional equation of
the form
$$
\Lambda_Q(f,s):=Q(s)f(s)=-\Lambda_Q(f,1-s)
$$
by considering functions antisymmetric with respect to the point
$s=1/2$, i.e  $f(1-s)=-f(s)$, for all $s\in\C$. Such functions must
have a zero for $s=1/2$.
\end{remark}

We thank Andrew Granville and J\"orn Steuding for reading our
manuscript and making helpful suggestions and especially Markus
Niess who, after reading the next-to-last version in detail, pointed
out errors and suggested simplifications . We also thank Javad
Mashreghi. Corollary \ref{product} is in response to a question he
posed, when one of us lectured on this topic.

\end{document}